\tikzset{snake it/.style={decorate, decoration=snake}}
\newcommand{\val}[1]{\|#1\|}
\DeclarePairedDelimiter\ceil{\lceil}{\rceil}
\DeclarePairedDelimiter\floor{\lfloor}{\rfloor}
\newcommand{\la}{\langle}
\newcommand{\ra}{\rangle}
\newcommand{\longversion}[1]{}
\renewcommand{\longversion}[1]{#1}
\DeclareFontFamily{OMX}{MnSymbolE}{}
\DeclareSymbolFont{MnLargeSymbols}{OMX}{MnSymbolE}{m}{n}
\DeclareFontShape{OMX}{MnSymbolE}{m}{n}{
	<-6>  MnSymbolE5
	<6-7>  MnSymbolE6
	<7-8>  MnSymbolE7
	<8-9>  MnSymbolE8
	<9-10> MnSymbolE9
	<10-12> MnSymbolE10
	<12->   MnSymbolE12
}{}
\DeclareFontShape{OMX}{MnSymbolE}{b}{n}{
	<-6>  MnSymbolE-Bold5
	<6-7>  MnSymbolE-Bold6
	<7-8>  MnSymbolE-Bold7
	<8-9>  MnSymbolE-Bold8
	<9-10> MnSymbolE-Bold9
	<10-12> MnSymbolE-Bold10
	<12->   MnSymbolE-Bold12
}{}
\DeclareMathDelimiter{\ulcorner}
{\mathopen}{MnLargeSymbols}{'036}{MnLargeSymbols}{'036}
\DeclareMathDelimiter{\urcorner}
{\mathclose}{MnLargeSymbols}{'043}{MnLargeSymbols}{'043}
\DeclareFontFamily{U}{MnSymbolC}{}
\DeclareSymbolFont{MnSyC}{U}{MnSymbolC}{m}{n}
\DeclareMathSymbol{\boxdot}{\mathbin}{MnSyC}{"76}
\DeclareMathSymbol{\diamonddot}{\mathbin}{MnSyC}{"7E}
\DeclareFontShape{U}{MnSymbolC}{m}{n}{
    <-6>  MnSymbolC5
   <6-7>  MnSymbolC6
   <7-8>  MnSymbolC7
   <8-9>  MnSymbolC8
   <9-10> MnSymbolC9
  <10-12> MnSymbolC10
  <12->   MnSymbolC12}{}
\DeclareSymbolFont{extraup}{U}{zavm}{m}{n}
\DeclareMathSymbol{\vardiamond}{\mathalpha}{extraup}{87}
\newcommand{\dotminus}{\mathbin{\text{\@dotminus}}}
\newcommand{\@dotminus}{%
	\ooalign{\hidewidth\raise1ex\hbox{.}\hidewidth\cr$\m@th-$\cr}%
}
\DeclareRobustCommand{\ontop}{\genfrac{}{}{0pt}{}}
\begin{document}
\title{The universal tangle for spatial reasoning\thanks{
Supported by the FWO-FWF Lead Agency grant G030620N (FWO)/I4513N (FWF) and by the SNSF--FWO Lead Agency Grant 200021L\_196176/G0E2121N.}}
%
%
\author{David Fern\'andez-Duque\inst{1,2}\orcidID{0000-0001-8604-4183} \and
Konstantinos Papafilippou\inst{1}\orcidID{0000-0002-2831-0575} }
\authorrunning{D. Fern\'andez-Duque \& K. Papafilippou}
%
\institute{Ghent University, Ghent, Belgium\\
\email{Konstantinos.Papafilippou@UGent.be} \and
University of Barcelona, Barcelona, Spain\\
\email{fernandez-duque@ub.edu}}
\maketitle              
\begin{abstract}
The topological $\mu$-calculus has gathered attention in recent years as a powerful framework for representation of spatial knowledge.
In particular, spatial relations can be represented over finite structures in the guise of weakly transitive ({\sf wK4}) frames.
In this paper we show that the topological $\mu$-calculus is equivalent to a simple fragment based on a variant of the `tangle' operator.
Similar results were proven for transitive frames by Dawar and Otto, using modal characterisation theorems for the corresponding classes of frames. However, since these theorems are not available in our setting, which has the upshot of providing a more explicit translation and upper bounds on formula size.
\end{abstract}
\section{Introduction}

Qualitative spatial reasoning aims to capture basic relations between regions in space in a way that is computationally efficient and thus suitable for knowledge representation and AI (see \cite{Cohn,Stell} for overviews).
The {\em region connection calculus} ($\sf RCC8$)~\cite{RCC8,RCC8b} deals with relations such as `partially overlaps' (e.g.~Mexico and Mesoamerica) or `is a non-tangential proper part' (e.g.~Paraguay and South America) while avoiding undecidability phenomena by not allowing for quantification over points or regions.

$\sf RCC8$ can be embedded into modal logic ($\sf ML$) with a universal modality~\cite{WolterZ00}.
This allows us to import many techniques from $\sf ML$, including the representation of regions using transitive Kripke frames, i.e.~pairs $\langle W,\sqsubset\rangle$, where $W$ is a set of points and $\sqsubset$ is a transitive relation representing `nearness'.
It also tells us that little is lost by omitting quantifiers, due to so-called modal characterization theorems~\cite{JvBThesis}, which state that $\sf ML$ is the bisimulation-invariant fragment of first order logic ($\sf FOL$), while its extension to the modal $\mu$-calculus is the bisimulation-invariant fragment of monadic second order logic ($\sf MSO$) \cite{JW}.

However, these results apply to frames where $\sqsubset$ is an arbitrary relation, whereas Dawar and Otto~\cite{DO} showed that the situation over finite, transitive frames is subtle.
In this setting, the bisimulation-invariant fragments of  $\sf FOL$ and $\sf MSO$ coincide, but are stronger than modal logic.
They are in fact equal to the $\mu$-calculus, but this in turn can be greatly simplified to its {\em tangled} fragment, which adds expressions of the form $\diamond^\infty\{\varphi_1,\ldots,\varphi_n\}$, stating that there is an accessible cluster of reflexive points where each $\varphi_i$ is satisfied.

Finite, transitive frames are suitable for representing spatial relations on metric spaces, such as Euclidean spaces or the rational numbers~\cite{Lucero-Bryan13,GH17}.
However, for the more general setting of topological spaces, one must consider a wider class of frames called {\em weakly transitive} frames: a relation $\sqsubset$ is weakly transitive if $x\sqsubset y\sqsubset z$ implies $x\sqsubseteq z$.
The modal logic of finite, weakly transitive frames is precisely that of all topological spaces~\cite{esakia-derivative}, and this result extends to the full $\mu$-calculus~\cite{BBFMu}.
In this spatial setting, Dawar and Otto's tangled operator becomes the {\em tangled derivative}, the largest subspace in which two or more sets are dense: for example, the tangle of $\mathbb Q$ and $\mathbb R\setminus\mathbb Q$ is the full real line, since the rationals and the irrationals are both dense in $\mathbb R$.
In the case of a single subset $A$, $\Diamond^\infty\{A\}$ is the {\em perfect core} of $A$, i.e.~its largest perfect subset, a notion useful in describing the limit of learnability after iterated measurements~\cite{SurpriseKR}.

Alas, over the class of weakly transitive frames, the tangled derivative is not as expressive as the $\mu$-calculus \cite{BBFMu}, which is in turn less expressive than the bisumulation-invariant fragment of $\sf MSO$, so Dawar and Otto's result fails.
Gougeon~\cite{Quent21}   proposed a more expressive operator, which here we simply dub the {\em tangle} and denote by $\vardiamond^\infty$, which coincides with the tangled derivative over metric spaces (and other spaces satisfying a regularity property known as $T_D$ spaces), but is strictly more expressive over the class of   topological spaces.
While this tangle cannot be as expressive as the bisimulation-invariant fragment of $\sf MSO$, it was still conjectured to be as expressive as the   $\mu$-calculus, thus providing a streamlined framework for representing spatial properties relevant for the learnability framework of \cite{SurpriseKR}.
This conjecture is supported by the recent result stating that the topological $\mu$-calculus collapses to its alternation-free fragment~\cite{alternation}.

In this paper we give an affirmative answer to this conjecture.
Moreover, since we cannot use games for $\sf FOL$ to establish our results, our proof uses new methods which have the advantage of providing an explicit translation of the $\mu$-calculus into tangle logic.
Among other things, we provide an upper bound on formula size, which is doubly exponential.
It is not clear if this can be greatly improved, given the exponential lower bounds of~\cite{Iliev}.

Despite the spatial motivation for the $\mu$-calculus over {\sf wK4}, the results of \cite{BBFMu} allow us to work   within the class of weakly transitive frames; since their logic is that of all topological spaces, our expressivity results lift to that context as well.
The upshot is that background in topology is not needed to follow the text.

\subsection*{Layout}
In Section \ref{sect: prelim} we review the $\mu$-calculus, present Gougeon's tangle and some basic semantic notions over path-finite weakly transitive ({\sf wK4}) frames. Section \ref{sect: finality} begins with a review of finality as used in~\cite{BBFMu}, as well as establishing additional properties we need. In Section \ref{sect: structural evaluation} we construct some formulae in the tangle logic that peer into the structure of a given Kripke model, which we use to show that the $\mu$-calculus is equivalent to the tangle logic and strictly weaker than the bisimulation invariant part of first order logic over finite and path finite {\sf wK4} frames.

\section{Preliminaries}\label{sect: prelim}

As is often the case when working with $\mu$-calculi, it will be convenient to define the $\mu$-calculus with each of the positive operations, including $\nu x. \varphi$, as primitive, and with negation being only subsequently defined.  
\begin{definition}
    The language of the modal $\mu$-calculus $\mathcal{L}_{\mu}$ is defined by the following syntax:
   \begin{align*}
       \varphi ::= \top \, | \, x \, | \, p \, | \, \neg p\, |\, \varphi \wedge \varphi \, |\, \varphi \vee \varphi \, |\, \Diamond \varphi \,|\, 
    \Box \varphi \,|\, \nu x. \varphi(x) \,|\, \mu x. \varphi (x) 
   \end{align*}
    where $x$ belongs to a set of `variables' and $p$ to a set of `constants', denoted $\mathbb{P}$.
    
    Under this presentation of the language, the formulas are said to be in \emph{negation normal form}. 
    Negation is defined classically as usual with 
     $\neg \nu x. \varphi(x) := \mu x. \neg \varphi (\neg x) $ and $\neg \mu x. \varphi(x) := \nu x. \neg \varphi (\neg x) $.
    We also write $\diamonddot \varphi := \varphi \vee \Diamond \varphi$ and similarly $\boxdot \varphi:= \varphi \wedge \Box \varphi$.
\end{definition}

The following is the standard semantics for the $\mu$-calculus over frames with a single relation $\sqsubset$ (or $\sqsubset_M$, to specify the frame).

\begin{definition}\label{defSem}
A Kripke frame is a tuple $\mathcal{F}= \langle M,\sqsubset_M\rangle$ where ${\sqsubset_M} \subseteq M\times M$.
A Kripke model is a triple $\mathcal{M}= \langle M,\sqsubset_M,\| \cdot \|_M\rangle$ where $\langle M,\sqsubset_M \rangle$ is a Kripke frame with a valuation $\| \cdot \|_M:\mathbb{P}\to  \mathcal{P}(M)$. In the sequel, we will use $\mathcal{M}$ and $M$ interchangeably. We denote the reflexive closure of $\sqsubset_M$ by $\sqsubseteq_M$.  

Given $A \subseteq M$, we denote the irreflexive and reflexive upsets of $A$ as $A{\uparrow_M} := \{ w \in M : \exists v \in A \ v \sqsubset_M w\} $ and $A{\uparrow^*_M} := A{\uparrow_M} \cup A$ respectively. The downsets are similarly denoted as $A{\downarrow_M}:= \{ w \in M : \exists v \in A \ w \sqsubset_M v\} $ and $A{\downarrow^*_M}:= A{\downarrow_M} \cup A$ respectively. We will omit the $M$ in the subscript when we will be only referring to a single model.

The valuation $\| \cdot \| = \| \cdot \|_M$ is defined as usual on Booleans with:
\medskip

\noindent \begin{tabular}{lcl}
$\| \Diamond \varphi \| := \| \varphi \| {\downarrow} $
& \ \ \ \ \ &
$\|\mu x. \varphi (x)\|:= \bigcap \{ X \subseteq M : X = \|\varphi(X)\| \} $
\\
$\| \Box \varphi \| := M\setminus (( M\setminus \| \varphi \|) {\downarrow}) $
&&
$\|\nu x. \varphi (x)\|:= \bigcup \{ X \subseteq M : X = \|\varphi(X)\| \} $
\end{tabular}
\medskip

\noindent Given a Kripke model $M$ and a world $w \in M$ we say a formula $\varphi$ is satisfied by $M$ at the world $w$ and write $w \vDash_M \varphi$ iff $w \in \| \varphi \|_M$.

A formula $\varphi$ is {\em valid} over a class of models $\Omega $ if for every $M\in\Omega$, $\|\varphi\|_M=M$.
\end{definition}

We note that $\mu x. \varphi (x)$ and $\nu x. \varphi (x)$ are the least and greatest fixed points, respectively, of the operator $X\mapsto \varphi(X)$.

We will mostly concern ourselves only with weakly transitive frames.
A relation $R$ is weakly transitive iff for all $a,b,c$ where $a \neq c$, if $aRb $ and $ bRc$ then $aRc$. A frame or model is {\em weakly transitive} if its accessibility relation is.

\begin{example}\label{exmodel}
Consider a   frame $\mathcal F$ consisting of two irreflexive points $\{0,1\}$ such that $0\sqsubset 1$ and $1\sqsubset 0$; this frame is weakly transitive since $x\sqsubset y\sqsubset z$ implies $x=z$, but it is not transitive since e.g.~$0\sqsubset 1\sqsubset 0$ but $0\not\sqsubset 0$.
To extend this frame into a model, we assign subsets of $\{0,1\}$ to each propositional variable. Assume that our variables are $e$ (even), $o$ (odd), $p$ (positive) and $i$ (integer).
We   obtain a valuation $\|\cdot\|$ if we let $\|e\| = \{0\}$, $\|o\| = \{1\}$, $\|p\| = \{1\}$, and $\|i\| = \{0,1\}$.
Then, $\|o\vee \Diamond p\| =\{0,1\}$, since every element of our model is either odd or has an accessible positive point.
We may say that this formula is {\em valid} in our model.
\end{example}
 
Recall that a topological space is a pair $\langle X,\mathcal T\rangle$, where $\mathcal T$ is a family of subsets of $X$ (called the {\em open sets}) closed under finite intersections and arbitrary unions.
If $A\subseteq X$, $d(A)$ is the set of points $x\in X$ such that whenever $x\in U$ and $U$ is open, there is $y\in A\cap U\setminus \{x\}$; this is the set of {\em limit points} of $A$.
The topological semantics for the $\mu$-calculus is obtained by modifying Definition~\ref{defSem} by setting $\|\Diamond\varphi\|=d\|\varphi\|$.
This is the basis to the modal approach to spatial reasoning, but the following allows us to work with weakly transitive frames instead.

\begin{theorem}[\cite{BBFMu}]
For $\varphi\in\mathcal L_\mu$, the following are equivalent:
\begin{itemize}

\item $\varphi$ is valid over the class of all topological spaces.

\item $\varphi$ is valid over the class of all weakly transitive frames.

\item $\varphi$ is valid over the class of all finite, irreflexive, weakly transitive frames.

\end{itemize}
\end{theorem}

This extends results of Esakia for the purely modal setting~\cite{esakia-derivative}.
Next we recall bisimulations (see e.g.~\cite{Chagrov1997ModalL}), which are binary relations preserving truth of $\mu$-calculus formulas that will be very useful in the rest of the text.

\begin{definition}
    Given $P\subseteq \mathbb{P}$ a $P$-bisimulation is a relation $\iota \subseteq M \times N$ such that, whenever $\la u,v \ra \in \iota$:
    \begin{description}
        \item[atoms] $w\vDash_M p \, \Leftrightarrow v \vDash_M p$ for all $p \in P$;
        \item[forth] If $u \sqsubset_M u'$, then there is $v \sqsubset_N v'$ such that $\la u' , v' \ra \in \iota$;
        \item[back] If $v \sqsubset_N v'$, then there is $u \sqsubset_M u'$ such that $\la u' , v' \ra \in \iota$;
        \item[global] $dom(\iota )= M$ and $rng(\iota) = N$.
    \end{description}
    Two models are called $P$-bisimilar and we write $M \rightleftharpoons_P N$ if there is some $P$-bisimulation relation between them. Given subsets $A\subseteq M$ and $B \subseteq N$, we write $A \rightleftharpoons_P B$ when $M\upharpoonright A \rightleftharpoons_P  N \upharpoonright B$, where $\upharpoonright$ denotes the usual restriction to a subset of the domain.
\end{definition}

    In the sequel we will omit the $P$ in the subscript and assume it to be the set of constants occurring in some `target' formula $\varphi$. 
As mentioned, bisimulations are useful because they preserve the truth of all $\mu$-calculus formulas, i,e.~if $\langle w,v\rangle\in\iota$ and $\varphi$ is any formula (with constants among $P$), then $w\in\|\varphi\|$ iff $v\in\|\varphi\|$.
As such, since every weakly transitive model is bisimilar to an irreflexive weakly transitive model, we will make the convention that every arbitrary model mentioned in this paper is irreflexive.




As a general rule, the $\mu$-calculus is more expressive than standard modal logic: for example, in a frame $(W,R)$, reachability via the transitive closure of $R$ is expressible in the $\mu$-calculus, but not in standard modal logic.
However, in the setting of transitive frames, reachability is already modally definable (since $R$ is its own transitive closure), which means that the familiar examples to show that the $\mu$-calculus is more powerful than modal logic do not apply.
Dawar and Otto~\cite{DO} exhibited an operator, since dubbed the {\em tangle,} which is $\mu$-calculus expressible but not modally expressible.
They showed the surprising result that every formula of the $\mu$-calculus can be expressed in terms of tangle.
In this paper, we will use a variant introduced by Gougeon \cite{Quent21}.
When working with multisets\footnote{By working with multisets, we can write $\vardiamond^\infty \{ \phi , \phi\} $ instead of $\vardiamond^\infty\{\phi , \phi \wedge \top\}$.}, if $x$ occurs $n$ times in $A$ then it occurs $\max\{0,n-1\} $ times in $A\setminus \{x\}$.

\begin{definition}
Given a finite multiset of formulae $\Gamma   \subseteq \mathcal{L}_{\mu}$, the tangle modality is defined as follows: 
\[ \vardiamond^{\infty}\Gamma = \nu x. \bigvee_{\varphi  \in \Gamma} \big( \diamonddot (\varphi  \wedge x) \land \bigwedge_{ \psi\in\Gamma\setminus\{\varphi\}} \Diamond (\psi \wedge x) \big), \]
where $x$ does not appear free in any $\varphi \in \Gamma$. 

We can then define the tangle logic $\mathcal{L}_{\vardiamond^\infty}$ whose language is defined by the syntax, where $\Gamma \subseteq_{fin} \mathcal{L}_{\vardiamond^\infty}$ is a multiset:
\[
    \varphi ::= \top \,  | \, p \, | \, \neg \varphi \, |\, \varphi \wedge \varphi \, |\,   \Diamond \varphi \,|\,   \vardiamond^\infty \Gamma .
    \]
\end{definition}

It can be checked that over transitive frames, $ \vardiamond^\infty \Gamma$ is equivalent to the `tangled derivative' $ \Diamond^\infty \Gamma$ \cite{GH17}, given by $\Diamond^{\infty}\Gamma := \nu x.   \bigwedge_{ \varphi \in \Gamma } \Diamond (\varphi \wedge x)  $.
The two   are also equivalent over familiar spaces such as the real line, but not over arbitrary topological spaces or weakly transitive frames, in which case $ \vardiamond^\infty  $ can define $\Diamond^\infty$ but not vice-versa~\cite{Quent21}.
In metric spaces such as the real line (and a wider class known as $T_D$ spaces), $\vardiamond^\infty \Gamma$ holds on $x$ if there is a perfect set $A$ (i.e., $A$ has no isolated points) containing $x$ such that for each $\varphi\in \Gamma$, $\val\varphi\cap A$ is dense in $A$.

\begin{example}
Consider a topological model based on the real line $\mathbb R$ with $\val r$ being the set of rational points and $\val i$ the set of irrational points.
Then, $\vardiamond^\infty\{r,i\}$ is valid on the real line, given that the sets of rational and irrational numbers are both dense.
In contrast, if we let $\val z$ be the set of integers, we readily obtain that $\vardiamond^\infty\{z,i\}$ evaluates to the empty set, given that the subspace of the integers consists of isolated points and hence we will not find any common perfect core between $\val z $ and $\val i$.
\end{example}

The tangle simplifies a bit when working over finite transitive frames.
In this case, this operator is best described in terms of clusters.
A cluster $C$ of a model $\mathcal{M}=\la M, \sqsubset, \| \cdot \|\ra$ is a subset of $M$ such that $\forall u,v\in C \, u \sqsubseteq v$.
Note that we don't define clusters to be maximal (with respect to set inclusion).
In contrast, {\em the} cluster of $w$ in $M$ is the set $C_w= \bigcup\{ C : C \text{ is a cluster of M and } w \in C \} $.

 It is well known that a transitive relation (and indeed even a weakly transitive relation) can be viewed as a partial order on its set of maximal clusters.
 To this end, define $w\prec v$ if $w\sqsubset v\not\sqsubset w$, and for $A,B \subseteq M$, we write:
\begin{itemize}
    \item $A \prec B$ iff $\forall v \in B\, \exists u \in A \, u \sqsubset v \not \sqsubset u $
    \item $A \preceq B$ iff $\forall v \in B\, \exists u \in A \, u \sqsubseteq v $.
\end{itemize}
Then, $\prec$ is a strict partial order on the maximal clusters of $M$.
In the sequel, $A, B$ will usually be nonempty clusters.
We also define e.g.~$w\prec A$ by identifying $w$ with $\{w\}$.

\begin{lemma}
Fix a multiset $\Gamma$ and a finite pointed model $(M,w)$, we have that $w\vDash_M \vardiamond^\infty \Gamma$ iff there is a cluster $C $ of $ M$ such that $w \preceq C$ and a map $f\colon C\to \Gamma$ such that $u\in\|f(u)\|$ for all $u \in C$, and whenever $\varphi\in\Gamma\setminus\{f(u)\}$, then there is $v\in C$ such that $u\sqsubset v \in C$ and $v \in \| \varphi \|$.
\end{lemma}



\begin{example}
Recall the model of Example~\ref{exmodel}, consisting of an irreflexive cluster $\{0,1\}$ with $\|e\| = \{0\}$, $\|o\| = \{1\}$, $\|p\| = \{1\}$, and $\|i\| = \{0,1\}$.
We then have that $\vardiamond^\infty \{e,o\} = \{0,1\}$, since each point is either even and has an accessible point that is odd, or vice-versa.
On the other hand, $\vardiamond^\infty \{o,p\} = \varnothing$, since we cannot assign any atom $a\in\{o,p\}$ to $1$ in such a way that $1$ satisfies $\diamonddot a\wedge\Diamond a' $, where $a' $ is the complementary atom to $a$. And if $0$ were to satisfy $ \diamonddot (a\land x) \wedge\Diamond (a'\land x)$, then $1$ would also have to satisfy $\diamonddot a\wedge\Diamond a' $, something we have already shown to be impossible.
Thus it is not enough for each element of $\Gamma$ to be satisfied in a cluster in order to make $\vardiamond^\infty \Gamma$ true: instead, each point $w$ must have an accessible world satisfying all but possibly one element $\varphi_w$ of $\Gamma$, in which case it must also satisfy $\varphi_w$.
\end{example}

\section{Final submodels}\label{sect: finality}

The technique of {\em final worlds} is a powerful tool in establishing the finite model property for many transitive modal logics~\cite{Fin74c}, and is also applicable to the $\mu$-calculus over weakly transitive frames~\cite{BBFMu}.
The idea here is that only a few worlds in a model contain `useful' information, and the rest can be deleted.
These `useful' worlds are those that are maximal (or {\em final}) with respect to $\sqsubseteq$, among those satisfying a given formula of $\Sigma$.

\begin{definition}[$\Sigma$-final]
Given a model $M$ and a set of formulas $\Sigma$, a world $w\in M$ is $\Sigma$-final if there is some formula $\varphi \in \Sigma$ such that $w \vDash_M \varphi$ and if $w\sqsubset u$ and $u \vDash_M \varphi$, then $u \sqsubset w$.\\
A set $A\subseteq M$ will be called $\Sigma$-final iff every $w\in A$ is $\Sigma$-final. The $\Sigma$-final part of $M$ is the largest $\Sigma$-final subset of $M$ and we denote it by $M^\Sigma$.
\end{definition}

Sometimes we need to `glue' a root cluster to a $\Sigma$-final model.
To this end, a rooted model $(M,w)$ will be called $\Sigma$-semifinal if $M\setminus C_w$ is $\Sigma$-final.

Baltag et al.~\cite{BBFMu} built on ideas of Fine~\cite{Fin74c} to show via final submodels that the topological $\mu$-calculus has the finite model property.
While final submodels are not necessarily finite (if $M$ is infinite), they do have finite {\em depth.}
  Given a model $M$, a set of formulas $\Sigma$ and $w
   \in M$, we define the {\em depth of $w$ in $M$,} denoted $dpt ^M (A) $, as the supremum of all $n$ such that $w=w_0
  \prec w_1\prec w_2\prec\ldots\prec w_n$ (recall that $\prec$ is the strict part of $\sqsubset$); note that this is finite on finite weakly transitive models but could be infinite on infinite ones.
For $A \subseteq M$ we define the depth of $A$ in $M$ to be $dpt  ^M(A) = \sup (0\cup \{    dpt  ^M (w):w\in A \})$.
  The $\Sigma$-depth of $w$ is defined analogously, except that here we only consider chains such that $w_1,\ldots,w_n\in M^\Sigma$ (note that $w$ itself need not be $\Sigma$-final).
  Then we define $dpt_\Sigma ^M(A)$ as before.
It is not hard to check that $dpt_\Sigma ^M(w)$ is bounded by $|\Sigma|$, and thus if $\Sigma$ is finite we can immediately control the depth of any $\Sigma$-final model. From a model of finite depth, it is easy to obtain a finite model.

In order to use this idea towards a proof of the finite model property (and also for our own results), one must carefully choose $\Sigma$ so that for any $\varphi\in \Sigma$ and $w\in M^\Sigma$, we have that $M^\Sigma ,w\equiv_\Sigma M,w$.
For example, $\Sigma$ should be closed under subformulas, but since we are in the $\mu$-calculus, we will have to find a way to treat the free variables that show up in said subformulas.
Because of this, we define a variant of the set of subformulas of a given formula where any free occurrence of a variable is labelled according to its binding formula, thus making sure that the same variable does not appear free with different meanings.
We also need to treat reflexive modalities as if they were primitive.

\begin{definition}
We define the modified subformula operator $sub^*:\mathcal{L}_{\mu}\to \mathcal{P}(\mathcal{L}_{\mu})$ recursively by
\begin{itemize}
    \item $sub^*(r) = \{r\}$ if $r = \top, p, x$;
    \item $sub^*(\neg p) = \{ \neg p , p\}$;
    \item $sub^*(\varphi \ocircle \psi) = \{ \varphi \ocircle \psi \} \cup sub^*(\varphi )\cup sub^* (\psi)$ where $ \ocircle = \wedge \text{ or } \vee$ and $\varphi \ocircle \psi \neq \diamonddot \sigma\text{  or } \boxdot \sigma$ for some $\sigma$;\footnote{Remember that $\diamonddot \sigma $ abbreviates $ \sigma \lor \Diamond \sigma$ and similarly  $\boxdot \sigma = \sigma \wedge \Box \sigma$. }
    \item $sub^*(\ocircle \psi) = \{ \psi \}\cup sub^*(\psi)$ where $\ocircle = \Diamond, \Box, \diamonddot $ or $\boxdot$;
    \item $sub^* (\nu x. \varphi) = \{ \varphi (x_{\nu x.\varphi})\}\cup sub^*(\varphi(x_{\nu x.\varphi})) $ where $x_{\nu x.\varphi}$ is a fresh propositional variable named after $\nu x. \varphi$;
    \item $sub^* (\mu x. \varphi) = \{ \varphi (x_{\mu x.\varphi})\}\cup sub^*(\varphi(x_{\mu x.\varphi})) $ where $x_{\mu x.\varphi}$ is a fresh propositional variable named after $\mu x. \varphi$.
\end{itemize}

Given a set of formulae $\Sigma$, we can define a partial order on $sub^*[\Sigma]$ by $\varphi <_{sub^*} \psi$ iff $\varphi \in sub^* (\psi)$ and $\varphi \neq \psi$.
\end{definition}
Observe that if $x_\psi$ is a free variable of $\varphi$, then $\varphi <_{sub^*} \psi $.
So we will work with these altered subformulas, but we also need to close $\Sigma$ under some further operations.
Given a set $\mathbb{X}$, some $Y \subseteq \mathbb{X}$ and a set $\mathcal{A}$ of mappings $a: \mathbb{X} \to\mathcal{P}(\mathbb{X}) $, we define the closure of $Y$ over $\mathbb{X}$ inductively as follows:
\begin{itemize}
    \item $Cl^0_\mathcal{A}(Y)=Y$;
    \item $Cl^{\alpha+1}_\mathcal{A}(Y)=Cl^\alpha_\mathcal{A}(Y)\cup \{a (x): a \in \mathcal{A} \ \& \, x \in Cl^\alpha_\mathcal{A}(Y)\}$;
    \item $Cl^\lambda_\mathcal{A}(Y)=\displaystyle\bigcup_{\alpha<\lambda} Cl^\alpha_\mathcal{A}(Y)$ for $\lambda \in Lim$.
\end{itemize}
$Cl_\mathcal{A}(Y) = Cl^\alpha_\mathcal{A}(Y)$ where $\alpha $ is any ordinal such that $Cl^\alpha_\mathcal{A}(Y)=Cl^{\alpha+1}_\mathcal{A}(Y)$.

For the remainder of the paper, unless stated otherwise, we will be working with a set of formulae $\Sigma$ such that $\Sigma = Cl_{\diamonddot, sub^*, \neg}(\Sigma)$. Observe that any finite set $\Sigma_0$ can be extended to a $\Sigma$ with this property that is finite up to modal equivalence of formulae since in $\sf S4$ there are only finitely many non equivalent modalities and $\diamonddot$ is an $\sf S4$ modality \cite{Chagrov1997ModalL}.

Since we have labelled our variables by their binding formula, we can substitute this formula back and obtain a `closed' version of this formula.
 
\begin{lemmaarep}
    Fix a finite set of formulas $\Sigma$ closed under $sub^*$ and some $\varphi \in \Sigma$, we let $\floor \varphi$ denote the closed form of $\varphi$; that is every instance of $x_\psi$ is substituted by $\psi$ recursively until there are no free variables left.\\
    It holds that $\floor \varphi \in \mathcal{L}_\mu$ for each $\varphi \in \Sigma$.
\end{lemmaarep}
\begin{proof}
    Inductively on the inverse of $<_{sub^*}$.\\
    Suppose $\varphi$ is such that there is no $\psi$ with $\varphi <_{sub^*} \psi $, then it has no free variables of the form $x_\psi$ and so $\lfloor \varphi \rfloor = \varphi$.\\
    Now given $\varphi$, assume by our induction hypothesis that $\lfloor \psi \rfloor \in \mathcal{L}_\mu$ for all $\varphi <_{sub^*} \psi$. Since for all $x_\psi$ in $\varphi$ it is the case that $\varphi <_{sub^*} \psi$, then $\lfloor \varphi \rfloor = \varphi [ x_\psi / \lfloor \psi \rfloor ]$ i.e. we substitute each $x_\psi$ showing up in $\varphi$ with $\floor \psi$.
\end{proof}

Observe that additionally $\neg \lfloor \varphi \rfloor$ is equivalent to $\lfloor \neg \varphi \rfloor$ for all $\varphi \in \Sigma$.
In the sequel, given a model $M$ and a set of formulae $\Sigma$ closed under $sub^*$, we will read $w \vDash_M \varphi$ to mean $w \vDash_M \floor \varphi$.
In particular, this means that $w$ is final for $\varphi$ in $M$ iff it is final for $\floor \varphi$ in $M$.

\begin{definition}
Fix a finite rooted model $(M,w)$ and a set of formulas $\Sigma$, we will write 
\begin{align*}
    w \vDash_M \overline{\la n \ra} \varphi :\Leftrightarrow   \exists v \in M^{\Sigma} \, ( v \sqsupseteq w \wedge  dpt_{\Sigma}(v)=n  \wedge \,v\vDash_M \varphi ).
\end{align*}

\end{definition}
Since for a given cluster $C$ of $M$ and $u,v \in C, \, u\vDash_M \overline{\la n \ra}\varphi \Leftrightarrow v \vDash_M \overline{\la n \ra}\varphi$, we will occasionally make an abuse of notation and write $C \vDash_M \overline{\la n \ra}\varphi$ to mean $\exists u \in C \, u \vDash_M \overline{\la n \ra}\varphi$.

The formulas $\overline{\la n \ra}\varphi$ provide all the information needed to evaluate truth on $C$:

\begin{theoremarep}\label{thm: depth with cluster equivalence}
Let $(M,w),(N,w)$ be finite rooted models with root clusters $C$ and $C'$ respectively. Assume that $dpt_{\Sigma}^M(w) = dpt_{\Sigma}^N(w)$ and $\forall \varphi \in \Sigma$ $w\vDash_M \overline{\la n \ra} \varphi \Leftrightarrow w \vDash_N \overline{\la n \ra} \varphi $ for all $n < dpt_{\Sigma}^M(w)$, and
\begin{itemize}
    \item if $C$ is $\Sigma$-final then $C'=C$
    \item if $C$ is not $\Sigma$-final then $C'\subseteq C$
\end{itemize}
then $\forall v \in C' \, \forall \varphi \in \Sigma \ v\vDash_M \varphi $ iff $v\vDash_N \varphi $.
\end{theoremarep}

\begin{proof}
    Given a set of formulas $\Sigma \subseteq \mathcal{L}_\mu$, define
    \[
    \Sigma^\nu = \{ \sigma \in \Sigma : \sigma = \nu x. \varphi(x) \text{ or } \sigma = \mu x. \varphi(x) \text{ for some } \varphi \}.
    \]

Let $\vec X = (X_{\psi})_{\psi \in \Sigma^{\nu}_2}$ and $\vec Y = (Y_{\psi})_{\psi \in \Sigma^\nu}$ be tuples of sets such that $X_\psi \subseteq M$, $Y_\psi \subseteq N$, $X_\psi \cap (M \setminus  C') = \| \lfloor \psi \rfloor \|_M \setminus C'$, $Y_\psi \cap (N \setminus  C') = \| \lfloor \psi \rfloor \|_N \setminus C'$ and $X_\psi \cap C' = (Y_\psi \cup \| \psi(\vec X) \|_M) \cap C' $.\footnote{Here more formally, $\vec X$ only contains the $X_\sigma$ such that $x_\sigma$ occur as free in $\psi$.} Observe that the $X_\psi$ are well defined. We show this by induction on the inverse of $<_{sub^*}$ on the set $\Sigma ^\nu$. $\psi$ is such that there are no $\psi <_{sub^*} \sigma$, then $X_\psi = \| \psi \|_M \cup Y_\psi$ and it is well defined. Suppose that $\psi$ is such that $X_\sigma$ is well defined for all $\psi <_{sub^*} \sigma $, then since the only occurrences of free variables $x_\sigma$ in $\psi$ are for $\psi <_{sub^*} \sigma$, and so $X_\psi = \| \psi [ x_\sigma / X_\sigma] \| _M \cup Y_\sigma$ which is well defined.
\begin{description}
    \item[Claim:] For all $\psi \in \Sigma$ and $w \in C'$, if $w\vDash_N \psi(\vec Y)$ then $w\vDash_M \psi(\vec X)$.
\end{description}
This is proven by induction on the structure of the form of formulas $\psi$ that have no occurrences of the $\neg$ symbol outside of the atomic cases. The cases for the set variables, literals variables and logical connectives are immediate.\\
Case for $\varphi =\Diamond\sigma$:\\
If $w \vDash_N \varphi(\vec Y)$, then $\exists u \in N \ w \sqsubset_N u$ such that $u \vDash_N \sigma(\vec Y)$. If $u \in C'$, then by induction hypothesis $u \vDash_M \sigma(\vec X)$ and since $w \sqsubset_M u$, then $w \vDash_M \varphi(\vec X)$. If $u \not \in C'$, then $u \vDash_N \sigma(\vec Y) $ iff $v\vDash_N \lfloor \sigma \rfloor$, so there is $\Sigma$-final $u \sqsubset_N u'$ such that $u' \vDash_N \lfloor\sigma\rfloor$ and so $w \vDash_N \overline{\la n \ra} \lfloor \sigma\rfloor $ for some $n < dpt_{\Sigma}^N(w)$. Thus $w \vDash_M \overline{\la n \ra} \lfloor \sigma\rfloor \Rightarrow w \vDash_M \lfloor \varphi \rfloor \Rightarrow w \vDash_M \varphi(\vec X) $.\\
Case for $\varphi = \Box \sigma$:\\
If $w \vDash_N \varphi (\vec Y)$ then --as with the $\Diamond$ case-- by the induction hypothesis, for all $w\sqsubset_N u$ with $u \in C'$ as well as for all $w \prec_M u$ $u \vDash_M \sigma(\vec X)$. In particular, for all $n< dpt_{\Sigma}^M (w)$ $ w \not\vDash_M \overline{\la n \ra} \neg \lfloor \sigma \rfloor $. 
In the case where $C\setminus C' \neq \varnothing$, we have that $C$ is not $\Sigma$-final in $M$, then assume towards a contradiction that for some $u \in C\setminus C'$ $u \not\vDash_M  \sigma (\vec X)$. Then by monotonicity we get that $u \not \vDash_M \floor \sigma$ i.e. $u \vDash_M \neg \floor \sigma$, then since $u$ is not $\Sigma$-final, there is some $u \sqsubset_M u'$ such that $u' \vDash_M \neg \floor \sigma$, a contradiction since $dpt_{\Sigma} ^M (u') < dpt_{\Sigma} ^M (w)$. Thus in either case $u \vDash_M \sigma(\vec X)$ for all $w \sqsubset_M u$ and so $w \vDash_M \varphi(\vec X)$.
\\
The cases for the $\diamonddot$ and $\boxdot$ formulas are mutandis mutatis.\\ 
Case for $\varphi =\nu x. \sigma (x)$:\\
Let $Y_{\varphi } = \| \varphi (\vec Y) \|_N $, then $ \| \varphi (\vec Y) \|_N \cap C' = \| \sigma (\vec Y) \| _N \cap C' = Y_\varphi \cap C' \subseteq X_\varphi  $. We show that $X_\varphi \subseteq \sigma (\vec X) $. Let $u \in X_\varphi$
\begin{description}
    \item[Case 1:] $u \in Y_\varphi \Leftrightarrow u \in \| \sigma (\vec Y) \|_N \Rightarrow u \in \| \sigma (\vec X) \|_M $ by the induction hypothesis.
    \item[Case 2:] $u \in \| \varphi(\vec X) \|_M = \| \sigma (\varphi(\vec X) , \vec X) \|_M \subseteq \| \sigma (\vec X) \|_M $ by monotonicity.
\end{description}
Case for $\varphi = \mu x. \sigma (x)$:\\
Since $C'$ is finite
$\| \varphi(\vec Y)\|_N = \|  \sigma^{m}(x, \vec Y)\|_N$ for some $m$, where 
\begin{description}
    \item $\sigma^0(x,\vec Y) := \| \lfloor \varphi \rfloor \|_N \setminus C'$;
    \item $\sigma^{n+1}(x,\vec Y) := \sigma( \sigma^n(x,\vec Y), \vec Y)$.
\end{description}
We show inductively on $n$ that for all  $Y_\varphi = \|\sigma^n (x, \vec Y) \|_N $ that $C' \cap Y_\varphi \subseteq \| \varphi (\vec X) \|_M = X_\varphi$. This is trivially true for $Y_\varphi = \| \floor \varphi \| _N \setminus C'$. For the inductive step, suppose it holds for some $Y_\varphi = \|\sigma^{n}(x,\vec Y)\|_N $, then by the original induction hypothesis, $\| \sigma^{n+1}(x,\vec Y)\|_N \cap C' =\| \sigma (\vec Y) \| _N \cap C' \subseteq \| \sigma (\vec X ) \|_M $ and so for $Y_\varphi ' = \|\sigma^{n+1}(x,\vec Y)\|_N $, we get $X_\varphi ' = \| \varphi (\vec X)\|_M$.\\
This proves the Claim. Now, by letting $Y_\psi = \| \lfloor \psi \rfloor \| _N $ for all $\psi \in \Sigma ^{\nu}$, we get by a simple induction on the inverse of $sub^*$ that $X_\psi = \| \floor \psi \|_M$, and so combining this with the claim we get:
\[\forall\varphi \in \Sigma \ \| \floor \varphi \| _N \cap C' \subseteq \| \floor \varphi \|_M \cap C'.\]
Since $\Sigma$ is closed under negation, it is also the case that $\| \lfloor \neg \varphi \rfloor \|_N \cap C' \subseteq \| \lfloor \neg \varphi \rfloor \|_M \cap C' $, ie $C' \setminus \| \lfloor  \varphi \rfloor \|_N  \subseteq C' \setminus \| \lfloor  \varphi \rfloor \|_M $ and so  $ \| \lfloor \varphi \rfloor \|_M \cap C' \subseteq \| \lfloor  \varphi \rfloor \|_N \cap C'$, which proves the theorem.
\end{proof}

As an immediate corollary, we get the following, where we write $M,u \equiv_{\Sigma}N,v$ to mean  $\forall \varphi \in \Sigma$ $u \vDash_M \varphi \Leftrightarrow v\vDash_N \varphi$. In case $M=N$, we may abbreviate this by $u\equiv_{\Sigma}v$.

\begin{theoremarep} \label{thm: in between model and its final part}
Given a finite model $M$, a model $N$ with $M \supseteq N \supseteq M^{\Sigma} $ and any $w \in N$, it holds that $M,w\equiv_{\Sigma} N,w$.
\end{theoremarep}

\begin{proof}
    By induction on the depth of the clusters of $N$. Clusters of depth $0$ are necessarily $\Sigma$-final and so the claim holds trivially true. Assume that the claim holds for all upwards closed submodels of $N$ of depth $m \leq n$, then by the assumption, a cluster of depth $m\leq n$ is then $\Sigma$-final in $M$ iff it is so in $N$, thus a cluster $C'$ of $N$ of depth $n+1$ satisfies the same $\overline{\la k \ra }\varphi$ formulae in $N$ as does its counterpart $C$ in $M$ for $k < dpt_\Sigma ^M (C) = dpt_\Sigma ^N (C') $. An application of Theorem \ref{thm: depth with cluster equivalence} concludes the proof.
\end{proof}

\section{Structural evaluation} \label{sect: structural evaluation}

The strategy we will follow to obtain an equivalence is to describe the parts of the world and the model that are relevant to Theorem \ref{thm: depth with cluster equivalence}. In particular we will define formulae in $\mathcal{L}_{\vardiamond^\infty}$ equivalent to the $\overline{\la n \ra}\varphi$ `formulae', as well as a formula which approximates the statement ``$w$ is $\Sigma$-final".

\longversion{
An alternative approach could have been to instead use only Theorem \ref{thm: in between model and its final part} and produce a formula that determines the $\Sigma$-final part of a given rooted model. While this approach would have also worked, it would have resulted in a super-exponential upper bound on the size of the induced formula (relative to the size of its equivalent $\mathcal{L}_\mu$ formula $\varphi$).
}

 Theorem \ref{thm: depth with cluster equivalence} tells us that we need very little information to evaluate truth of formulas on a given cluster, provided we have already evaluated them on clusters of lower depth.
 This information is recorded by (semi-)satisfaction pairs:

\begin{definition}
Given a model $M$ say that $\la C,\Theta \ra$ is a semi-satisfaction pair for $M$ if $\exists w \in M$ such that $C = C_w$ and $\Theta= \{ \overline{\la m \ra} \psi: w \vDash_M \overline{\la m \ra} \psi $ for $ \psi\in \Sigma \wedge m<dpt_{\Sigma}(w) \}$. A pair $\la C, \Theta\ra$ is called a semi-satisfaction pair if it is a semi-satisfaction pair for some finite pointed $ \Sigma$-semifinal model. A satisfaction pair for $M$ is a semi-satisfaction pair $\la C,\Theta\ra$ such that $C$ is $\Sigma$-final in $M$.

Given a semi-satisfaction pair $\la C,\Theta \ra$ for some model $M$, we define\footnote{Due to Theorem \ref{thm: depth with cluster equivalence}, $\Theta^C$ is uniquely determined irrespectively of the chosen model $M$ for which $\la C,\Theta \ra $ is a semi-satisfaction pair.}
\begin{equation*}
    \displaystyle\Theta^C := \{ \overline{\la m \ra} \psi: C \vDash_M \overline{\la m \ra} \psi \text{ for } \psi\in \Sigma \wedge m\leq dpt_{\Sigma}(C) \}.
\end{equation*}

We extend the definition of $dpt_{\Sigma} $ by saying $dpt_{\Sigma}(\Theta)= sup\{n: \overline{\la n \ra}\varphi \in \Theta$ for some $\varphi \in \Sigma\}$. Let $Sat_n$ be the set of satisfaction pairs $\la C , \Theta \ra$ such that $dpt_{\Sigma} (\Theta) = n$ and let $Sat^0_n, \, Sat^1_n$ be the first and second projections of $Sat_n$ respectively. Similarly, $Sat^*_n, Sat^{*0}_n, Sat^{*1}_n$ are the corresponding sets for semi-satisfaction pairs.
\end{definition}

We will need to compare clusters and semi-satisfaction pairs.
Roughly, $C\subsetplus C'$ indicates that $C$ is a smaller cluster than $C'$ (up to bisimulation), and $\la C , \Theta \ra \vartriangleleft \la C' , \Theta' \ra$ indicates that the two pairs vary only in their root cluster, where $C'$ is larger.

Let us make this precise.
Fix $P\subseteq \mathbb{P}$ and clusters $C $ and $ C'$ from models $\mathcal{M}= \la M, \sqsubset_\mathcal{M}, \| \cdot \|_\mathcal{M} \ra $ and $\mathcal{N}=\la N, \sqsubset_\mathcal{N}, \| \cdot \|_\mathcal{N} \ra$ respectively, we write $C \subsetpluseq_P C'$ to mean that there is some $C'' \subseteq  C''' $ such that $C' \rightleftharpoons_P C''$. Similarly $C\subsetplus C'$ is defined for when additionally $C \not \rightleftharpoons_P C'$. As with the bisimilarity notation, the $P$ subscript is omitted in the sequel.
Define $ \vartriangleleft_n \, \subseteq Sat_n \times Sat_n $ by $\la C' , \Theta' \ra \vartriangleleft_n \la C , \Theta \ra $ iff $C' \subsetplus C$ and $\Theta' = \Theta$. Let $\trianglelefteq_n$ be the reflexive closure of $\vartriangleleft_n$. We will write $\vartriangleleft$, $\trianglelefteq$ instead of $\vartriangleleft_n$, $\trianglelefteq_n$ when $n$ is clear.

Satisfaction pairs are sufficient to evaluate truth, but our definition of $\overline{\la n \ra}\varphi$ in tangle logic will be sensitive to depth (i.e., to $n$), and thus we need to control the $\Sigma$-depth of the model we are working in.
This is achieved by considering chains of satisfaction pairs: if a chain of length $n$ lies above a given world, that means that the depth of that world is at least $n$.
Since the property `there is a chain of length $n$' will be expressible in $\mathcal L_{\vardiamond^\infty}$, this will allow us to have the desired control over depth.

To formally define chains, we need to consider root clusters glued to a model.
Fix a finite model $M$ and a cluster $C$ with $M\cap C = \varnothing$, we denote by $\left[ \ontop{M}{C} \right]$ the model $N$ with domain $M\cup C$, accessibility relation $\sqsubset_N := {\sqsubset_M}\cup {\sqsubset_C} \cup {(C\times M)}$ and $\|\cdot\|_N := \|\cdot\|_M\cup\|\cdot\|_C$. 

\begin{lemma} \label{lem: witnessing chains}
     For every $\Sigma$-final model $M$ of depth $n$ with a root cluster $C$, there is some chain $\mathcal{C}=\{ \la C_i , \Theta_i \ra \}_{i\leq n}$ such that 
\begin{enumerate}
    \item $C_n = C$
    \item $\la C_i, \Theta_i \ra$ is a satisfaction pair for $M$ for each $i\leq n$
    \item $C_{i+1} \prec C_i$ for each $i <n$
    \item \label{item: describable chain condition} For all $i<n$, if $\left[ \ontop{C_i}{C_{i+1}} \right] \rightleftharpoons C_i$  then $\Theta_{i+1} \neq \Theta_i ^{C_i}$.
\end{enumerate}
\end{lemma}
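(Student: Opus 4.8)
The plan is to prove the lemma by exhibiting a single canonical chain and then showing that item~\ref{item: describable chain condition} is in fact \emph{forced}, rather than something we must engineer by a careful choice. First I would fix a maximal $\prec$-chain of clusters witnessing the depth: since $M$ is finite and $\Sigma$-final with $dpt_\Sigma^M(C)=n$, the definition of $dpt_\Sigma$ yields clusters $C=C_n\prec C_{n-1}\prec\dots\prec C_0$ with $dpt_\Sigma^M(C_i)=i$ for every $i$. As every cluster of a $\Sigma$-final model is $\Sigma$-final, each $\la C_i,\Theta_i\ra$ (with $\Theta_i$ the associated semi-satisfaction data) is a satisfaction pair for $M$, so items~1--3 hold immediately. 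A preliminary observation I would record is the monotonicity $\Theta_i^{C_i}\subseteq\Theta_{i+1}$: any witness $v$ of depth $m\le i$ lying reflexively above $C_i$ also lies strictly above $C_{i+1}$ (using $C_{i+1}\prec C_i$ and weak transitivity), so each $\overline{\la m\ra}\psi\in\Theta_i^{C_i}$ belongs to $\Theta_{i+1}$. Hence item~\ref{item: describable chain condition}, i.e.\ ``$\left[\ontop{C_i}{C_{i+1}}\right]\rightleftharpoons C_i \Rightarrow \Theta_{i+1}\neq\Theta_i^{C_i}$'', amounts to ruling out the \emph{bad case} in which $\left[\ontop{C_i}{C_{i+1}}\right]\rightleftharpoons C_i$ while $\Theta_{i+1}=\Theta_i^{C_i}$.

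The heart of the proof is therefore to show that the bad case is impossible in any $\Sigma$-final model. Assume it occurs for some $i$; write $C':=C_{i+1}$ and $D:=C_i$, and pick $c\in C'$ together with a finality witness $\varphi_c\in\Sigma$, so that $c\vDash_M\varphi_c$ and no point strictly above $C'$ satisfies $\varphi_c$. I would first prune $M$ to the single branch above $C'$ determined by $D$, forming $N:=\left[\ontop{D{\uparrow^*_M}}{C'}\right]$. The assumption $\Theta_{i+1}=\Theta_i^{D}$ is exactly what guarantees that discarding the other branches above $C'$ changes none of the $\overline{\la m\ra}\psi$ data at $C'$ for $m\le i$; since truth and $\Sigma$-depth depend only on the generated submodel, $C'$ retains depth $i+1$ and the same $\Theta$ in $N$ as in $M$. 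Theorem~\ref{thm: depth with cluster equivalence}, applied to $M$ and $N$ with common root cluster $C'$, equal depth, equal $\overline{\la m\ra}$-data and $C'$ being $\Sigma$-final, then yields $c\vDash_M\varphi_c\Leftrightarrow c\vDash_N\varphi_c$.

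Next I would transport $\varphi_c$ upward. The hypothesis $\left[\ontop{D}{C'}\right]\rightleftharpoons D$ supplies a bisimulation relating $c$ to some $d\in D$, and I would lift it to a bisimulation $N=\left[\ontop{D{\uparrow^*_M}}{C'}\right]\rightleftharpoons D{\uparrow^*_M}$ by keeping the given relation on $C'\cup D$ and using the identity on the clusters strictly above $D$; the forth/back clauses are routine, using that in $N$ every point of $C'$ sees all of $D{\uparrow^*_M}$ and that no point of $D{\uparrow^*_M}$ sees $C'$. Since bisimilar points agree on all $\mathcal{L}_\mu$-formulas, $c\vDash_N\varphi_c\Leftrightarrow d\vDash_{D{\uparrow^*_M}}\varphi_c$, and because $d{\uparrow^*_M}=D{\uparrow^*_M}$ the latter coincides with $d\vDash_M\varphi_c$. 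Chaining the equivalences gives $c\vDash_M\varphi_c\Rightarrow d\vDash_M\varphi_c$ with $d\in D$ lying strictly above $C'$, contradicting the choice of $\varphi_c$. This rules out the bad case and establishes item~\ref{item: describable chain condition} for the chain.

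I expect the main obstacle to be the two gluing steps of the second and third paragraphs: verifying that the pruning to $N$ leaves the $\overline{\la m\ra}\psi$-data at $C'$ intact (so that Theorem~\ref{thm: depth with cluster equivalence} genuinely applies), and checking that the local stacking bisimulation $\left[\ontop{D}{C'}\right]\rightleftharpoons D$ lifts to the full generated submodel $D{\uparrow^*_M}$. Both hinge on the observation that the clusters strictly above $D$ behave identically in $M$, in $N$ and in $D{\uparrow^*_M}$, since truth and $\Sigma$-depth of a world are determined by its generated submodel; once this is spelled out, the data equalities and the remaining bisimulation clauses follow mechanically.
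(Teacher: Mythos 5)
Your proof is correct and follows essentially the same route as the paper's: prune to the single branch $\left[\ontop{C_i{\uparrow^*_M}}{C_{i+1}}\right]$, use the assumed equality of the $\overline{\la m\ra}$-data to invoke Theorem~\ref{thm: depth with cluster equivalence} at $C_{i+1}$, lift the cluster bisimulation to the full cones, and contradict $\Sigma$-finality. The only difference is presentational: you show the bad case is impossible for \emph{every} adjacent pair of a maximal depth-chain (so condition~\ref{item: describable chain condition} holds automatically), whereas the paper phrases it as a proof by contradiction that some good next cluster exists --- but its contradiction argument likewise only ever uses a single bad pair.
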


\longversion{
\begin{proof}
    We need to show that we can always choose a chain as above such that the clause \ref{item: describable chain condition} above holds. Towards a contradiction, let $M$ be a $\Sigma$-final model with a root cluster $C$ of depth $n+1$ that belongs to a satisfaction pair $\la C,\Theta\ra$ for $M$ and such that for every cluster satisfaction pair $\la C' , \Theta' \ra $ of depth $n$, then $ \left[ \ontop{C'}{C} \right] \rightleftharpoons C'$ and $\Theta=\Theta'^{C'}$.
    Then by Theorem \ref{thm: depth with cluster equivalence} $ \left[ \ontop{C'{\uparrow^*_M}}{C} \right]$ and $M$ satisfy the same $\Sigma$ formulae on $C$. However $\left[ \ontop{C'{\uparrow^*_M}}{C} \right] \rightleftharpoons C'{\uparrow^*_M}$ which implies that $C$ is not $\Sigma$-final in $M$, a contradiction.
\end{proof}
}

    We will call a chain as in Lemma \ref{lem: witnessing chains} a {\em witnessing chain of depth $n$}; witnessing chains will be denoted as $\mathcal{C}$, $\mathcal{C'}$ or $\mathcal{C}_i$.
    Let $Chain_n$ be the set of witnessing chains of depth $n$. We extend $\vartriangleleft_n$ to $Chain_n \times Chain_n$ by setting $\mathcal{C}\vartriangleleft_n \mathcal{C}'$ iff the following hold:
    \begin{itemize}
        \item $\la C_i, \Theta_i\ra = \la C_i',\Theta_i'\ra$ for $i<n$
        \item $C_n  \subsetplus C_n'$
        \item $\Theta_n = \Theta_n'$
    \end{itemize}
    and let $\trianglelefteq_n$ be its reflexive closure. We will identify $\vartriangleleft$ and $\trianglelefteq$ to be the appropriate $\vartriangleleft_n$ and $\trianglelefteq_n$ respectively. Finally, given $n$ and a formula $\varphi \in \Sigma$, we write
\begin{align*}
    supp(\overline{\la n \ra } \varphi)= 
    \big\{ \mathcal{C} \in Chain_n : \exists (M,w) \text{ finite pointed }
    \Sigma\text{-final model where } \\
    w\vDash_M \varphi \wedge \, C_n=C_w \, \wedge 
    \mathcal{C} \text{ is a witnessing chain of depth } n \text{ for } M \big\}.
\end{align*}

The definition of witnessing chains can be further expanded to semifinal models, however the analogue of Lemma \ref{lem: witnessing chains} for semi-witnessing chains will not necessarily hold for any $\Sigma$-semifinal model as we cannot guarantee that we can always find a chain in that case for which condition \ref{item: describable chain condition} will hold for the root cluster.
In this setting, we instead use a weaker notion.

\begin{definition}
Given a $\Sigma$-semifinal model $M$ of depth $n$ with root cluster $C$, a semi-witnessing chain for $M$ of depth $n$ (if it exists) is some chain $\mathcal{C}=\{ \la C_i , \Theta_i \ra \}_{i\leq n}$ such that 
\begin{enumerate}
    \item $C_{n} = C$
    \item $\la C_i, \Theta_i \ra$ is a semi-satisfaction pair for $M$ for each $i\leq n$
    \item $C_{i+1} \prec C_i$ for each $i <n$
    \item For all $i<n$, if $\left[ \ontop{C_i}{C_{i+1}} \right] \rightleftharpoons C_i$  then $\Theta_{i+1} \neq \Theta_i ^{C_i}$.
\end{enumerate}

We will denote by $Chain_n^*$ the set of all semi-witnessing chains of depth $n$.
For $M$ an arbitrary finite model, a (semi-)witnessing chain on $M$ of depth $n$ will be a (semi-)witnessing chain on the $\Sigma$-(semi)final part of $w\uparrow^*_M$ for some $w\in M$. Finally for $\mathcal{C} \in Chain^*_n$, let $dpt(\mathcal{C}):=n$ denote its depth.
\end{definition}

\begin{figure}[ht]
    \centering
    \begin{tikzpicture}[every node/.style={font=\footnotesize}]
    \node[state]    (c1)   at (0,0)  {$C_n$};
    \node[state]    (c2)   at (-0.3,1.2)  {$C_{n-1}$};
    \node[state]    (c3)   at (0.7,2.4) {$C_{n-2}$};
  \path[draw] (-2.2,4) -- (-0.37,-0.17);
    \draw (2.2,4) -- (0.36,-0.17);
    \path[->]       (c1) edge node {} (c2)
                    (c2) edge node {} (c3);
    \path[-] (c3) edge [snake it] node {} (0,4);

    \node (w1) at (4.7,0) {w};
    \node[state]    (a3)   at (4.9,1.7) {$\la n \ra \varphi$};
    \node (text) at (4.7,3.2) {$\mathcal{C}$};
    \path[draw] (2.6,4) -- (w1);
    \draw (6.8,4) -- (w1);
    \path[->]       (w1) edge  node {} (a3);
    \path[-] (a3) edge [snake it,] node {} (4.1,4);
\end{tikzpicture}
    
    \caption{On the left, a witnessing chain. On the right, a witnessing chain ensures that the $\Sigma$-depth of a point where $\la n \ra \varphi$ holds is at least $n$.}
    \label{fig:chains}
\end{figure}
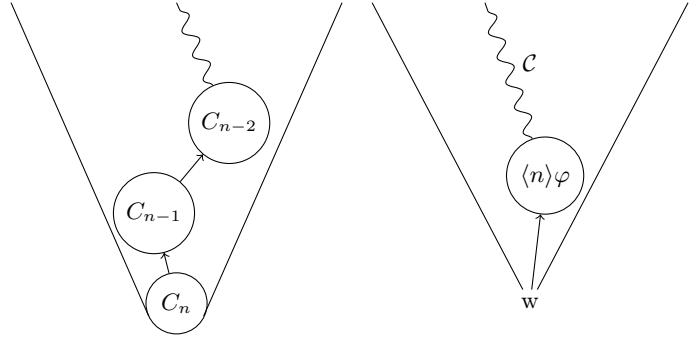

We can now define formulas equivalent to the $``\overline{\la n \ra} \varphi "$ in the language of $\mathcal{L}_{\vardiamond^\infty}$. This is done inductively by having the formula $\alpha$ express the existence of a witnessing chain $\mathcal{C}$ with a satisfaction pair $\la C,\Theta\ra$ underneath it. Then the formulae $\beta$ and $\gamma$ ensure that the extension $\mathcal{C}^\frown \la C,\Theta\ra$ is also a witnessing chain (i.e. the pair $\la C,\Theta \ra$ is as high as it can possibly be while remaining below $\mathcal{C}$). At this point it is important to note that if we were to simply use satisfaction pairs, we would run the risk of having the $\Sigma$-depth of worlds satisfying $\la n \ra \varphi$ being smaller than $n$; with witnessing chains, we ensure that the depth does not collapse.

\begin{definition}
    Fix $w\in M$ and a set of formulae $\Sigma$ let $\tau_w := \bigwedge_{p \in P(w)} p \wedge \bigwedge_{p\not \in P(w)}\neg p$, where $p\in \Sigma$. We will, as a convention, not include the model $M$ and the set $\Sigma$ in the notation.
    Below we define the formulas $\la n \ra \varphi\in\mathcal L_{\vardiamond^\infty}$, along with some auxiliary formulas and notation.
\begin{itemize}
    \item $Ir(\mathcal{C}):= \la C_{n},\Theta_{n}\ra  \trianglelefteq \la C_{n-1},\Theta _{n-1}^{C_{n-1}}\ra \wedge \exists w \in C_{n} \forall u \in C_{n} \cap w \uparrow \ P(w) \neq P(u)$ where $n=dpt(\mathcal{C})$
    \item $\displaystyle A(\Theta) := \bigwedge_{\overline{\la m \ra} \psi \in \Theta} \la m \ra\psi \wedge \bigwedge_{\overline{\la m \ra} \psi \not\in \Theta} \neg \la m \ra\psi$
    \item $
    \displaystyle\tau_w ^\mathcal{C} := 
    \begin{cases}
        \tau_w \wedge A(\Theta_{dpt(\mathcal{C})}) \wedge \Diamond \big( \tau_w \wedge \delta(\mathcal{C}{\upharpoonright} dpt(\mathcal{C})) \big) & \text{ if }  Ir(\mathcal{C})\\
        \tau_w \wedge A(\Theta_{dpt(\mathcal{C})}) \wedge \Diamond\delta(\mathcal{C}{\upharpoonright} dpt(\mathcal{C})) & \text{ otherwise }
    \end{cases} $
    \item $\alpha(\mathcal{C}):= \vardiamond^\infty \{ \tau_w ^\mathcal{C}: w \in C_{dpt(\mathcal{C})}\}$
    \item $\beta(\mathcal{C}) :=  \Box \big( \displaystyle \bigvee_{\mathcal{C}' \vartriangleleft \mathcal{C}} \alpha(\mathcal{C}') \to \alpha(\mathcal{C}) \big) $
    \item $\gamma(\mathcal{C}) := \displaystyle \neg \bigvee _{\mathcal{C}' \not \trianglelefteq \mathcal{C}} \alpha(\mathcal{C}')$
    \item $\delta(\mathcal{C}) := \alpha(\mathcal{C}) \wedge \beta (\mathcal{C}) \wedge \gamma (\mathcal{C})$
    \item $\la n \ra \varphi := \displaystyle \bigvee_{\mathcal{C} \in supp(\overline{\la n \ra } \varphi)} \diamonddot \delta(\mathcal{C})$
\end{itemize}
\end{definition}

Here, $A$ describes the $\overline{\la m\ra}$-formulas in a given $\Theta$, $Ir$ tells us when a bottom-most cluster in a chain has an `irreflexive point'\footnote{Whilst by our convention every world $w$ in $M$ is irreflexive, in this context we mean that $C_n,w \not\rightleftharpoons C',w'$ with $w'$ being reflexive.} which we can use to be able to jump to cluster in the chain above it, $\tau^\mathcal C_w$ describes the `local state' at $w$, $\alpha$ ensures that the desired chain is present, and $\beta$ and $\gamma$ rule out any unwanted chains.
By following step by step the definitions above, we can prove the following lemma:
\begin{lemmaarep} \label{lem: equivalence of the <n>phi definitions}
Fix a finite model $M$ a set of formulas $\Sigma$ and $w \in M$, it holds that $w\vDash_M \la n \ra \varphi \Leftrightarrow w \vDash_M \overline{\la n \ra} \varphi$ for all $\varphi \in \Sigma$.
\end{lemmaarep}

\begin{proof}
By induction on $n$.\\
Assume that the statement holds true for all $m<n$, and let $\mathcal{C} \in supp(\overline{\la n \ra }\varphi)$. We claim the following:
\begin{enumerate}
    \item \label{item: alpha claim} $w\vDash_M \alpha(\mathcal{C})$ iff $ \mathcal{C}{\upharpoonright} n$ is a witnessing chain of depth $n$ for $M$ strictly above $w$ (i.e. $w\prec C_{n-1}$) and there is some cluster $C = C_u$ for some $u\in M$ such that
    \begin{enumerate}
        \item $w \preceq C \prec C_{n-1}$;
        \item $C_n \subsetpluseq C$;
        \item $C \vDash_M A(\Theta_n)$.
    \end{enumerate}
    \item \label{item: delta claim} $w \vDash_M \diamonddot\delta(\mathcal{C}) $ iff $\mathcal{C}$ is a witnessing chain of depth $n$ for $M$ above $w$ (i.e. $w \preceq C_n$).
\end{enumerate}
We prove the claims by the same induction on $n$. From the definition of $\alpha(\mathcal{C})$ and by the I.H., it should be clear that $C_n \subsetpluseq C$ and $C \vDash_M A(\Theta_n)$    . By Claim \ref{item: delta claim}, $ \mathcal{C}{\upharpoonright} n$ is a witnessing chain of depth $n$ for $M$ above the cluster $C$ of Claim \ref{item: alpha claim} (i.e. $C\preceq C_{n-1}$). We show that it is strictly above $C$.

\begin{itemize}
    \item If $C\not\subsetpluseq C_{n-1}$ then by Claim \ref{item: delta claim} $C \prec C_{n-1}$;
    \item If $ C \subsetpluseq C_{n-1}$ and  $C_{n-1}\not \vDash_M A(\Theta_n) $, then $C \prec C_{n-1}$ as $C\vDash_M A(\Theta_n)$;
    \item If $ C \subsetpluseq C_{n-1}$ and $C_{n-1} \vDash_M A(\Theta_n)$, then let $N=\left[ \ontop{C_{n-1}{\uparrow^*_M}}{C_{n}} \right]$. Then since $C_{n-1}{\uparrow^*_M}$ is a model admitting the witnessing chain $\mathcal{C}{\upharpoonright} n$, then so is $N$ for $\mathcal{C}$. Then it must be the case that $Ir(\mathcal{C})$ as otherwise $N \rightleftharpoons C_{n-1}{\uparrow^*_M} $ which would contradict the $\Sigma$-finality of the cluster $C_n$ in $N$. Then any $v$ witnessing $Ir(\mathcal{C})$ would also be irreflexive in $C$ in $M$ and so $C \prec C_{n-1}$ from the definition of $\tau^\mathcal{C}_w$.
\end{itemize}
This proves Claim \ref{item: alpha claim}. Now observe that in addition to that, if the cluster $C$ of Claim \ref{item: alpha claim} has $dpt_{\Sigma}(C)=n$, then it is a $\Sigma$-final cluster. If not, then by Theorem \ref{thm: in between model and its final part} the model $N= \big(M \setminus (C\setminus C_n) \big) $ has the cluster $C_n$ not be $\Sigma$-final which in turn contradicts Theorem \ref{thm: depth with cluster equivalence}.

Now assume that the representative cluster $C$ for $C_n$ is the topmost cluster in $M$ in which $\alpha(\mathcal{C})$ holds. To prove Claim \ref{item: delta claim}, we first show that assuming $\diamonddot \delta (\mathcal{C})$, there is no $\Sigma$-final $C_v$ with $C_n \prec C_v \prec C_{n-1}$ and $dpt_{\Sigma}(C_v) = n$. Let $\mathcal{C}' = \mathcal{C}{\upharpoonright}n \cup \la n, \la C_v,\Theta \ra \ra$ for $\Theta$ such that $\mathcal{C}'$ is a witnessing chain of depth $n$ for $M$.

\begin{itemize}
    \item Assume $\mathcal{C}' \vartriangleleft \mathcal{C}$, i.e. $C_v \subsetplus C_n$ and $\Theta=\Theta_n$. By the clause of  $\beta(\mathcal{C})$, the clause of $\alpha(\mathcal{C})$ must hold above $C_v$. But this contradicts the choice of $C$.
    \item If $\mathcal{C}'\not \trianglelefteq \mathcal{C}$, then we contradict the clause of $\gamma (\mathcal{C})$.
\end{itemize}
For the reverse direction, let $\mathcal{C}$ a witnessing chain of depth $n$ for $M$ above $w$. We will show that for given $u \in C_n\, u\vDash_M \delta(\mathcal{C})$. It is clear that $\alpha(\mathcal{C})$ holds and additionally since $dpt_{\Sigma}(C_n)=n$ and $C_n$ is $\Sigma$-final, there are no other eligible witnessing chains $\mathcal{C}'$ with $\mathcal{C}' \vartriangleleft \mathcal{C}$ or $\mathcal{C}' \trianglelefteq\mathcal{C}$ above $u$ in $M$ and so $\beta(\mathcal{C})$ and $\gamma(\mathcal{C})$ hold as well.

Finally $w \vDash_M \overline{\la n \ra}\varphi $ iff there is some witnessing chain $\mathcal{C} \in supp(\overline{\la n \ra }\varphi) $ above $w$ iff $w \vDash_M \la n \ra \varphi$.
\end{proof}

\begin{corollary}
Fix a finite model $M$, some $w \in M$ and $C\in Chain_n^* \setminus Chain_n$, then $w\vDash_M \alpha(\mathcal{C})$ iff $ \mathcal{C}{\upharpoonright} n$ is a witnessing chain of depth $n$ for $M$ strictly above $w$ (i.e. $w\prec C_{n-1}$) and there is some cluster $C = C_u$ for some $u\in M$ such that
    \begin{enumerate}[label=(\alph*)]
        \item $w \preceq C \prec C_{n-1}$
        \item $C_n \subsetpluseq C$
        \item $C \vDash_M A(\Theta_n)$
    \end{enumerate}
\end{corollary}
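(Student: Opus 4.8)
The plan is to recognise that this statement is, word for word, the assertion established as Claim~\ref{item: alpha claim} inside the proof of Lemma~\ref{lem: equivalence of the <n>phi definitions}, the only change being that $\mathcal{C}$ now ranges over semi-witnessing chains in $Chain_n^*\setminus Chain_n$ rather than over witnessing chains. So I would simply re-run that argument and pinpoint the single place where the witnessing status of $\mathcal{C}$ --- equivalently, the $\Sigma$-finality of its top cluster $C_n$ --- was actually invoked, and check that it can be dispensed with.

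The first, enabling, observation is that the truncation $\mathcal{C}{\upharpoonright}n$ is a \emph{genuine} witnessing chain, one level shorter than $\mathcal{C}$, even though $\mathcal{C}$ itself is only semi-witnessing. Indeed $\mathcal{C}$ is a semi-witnessing chain for some $\Sigma$-semifinal model, whose only cluster that may fail to be $\Sigma$-final is its root $C_n$; every $C_i$ with $i<n$ lies strictly above $C_n$ and hence inside the $\Sigma$-final part, so each $\la C_i,\Theta_i\ra$ is a satisfaction pair and $\mathcal{C}{\upharpoonright}n$ is a witnessing chain. Consequently Claim~\ref{item: delta claim} of Lemma~\ref{lem: equivalence of the <n>phi definitions}, applied to the strictly shorter chain $\mathcal{C}{\upharpoonright}n$, is available unchanged, as is the induction hypothesis governing $\delta(\mathcal{C}{\upharpoonright}n)$. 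Since $\alpha(\mathcal{C})=\vardiamond^\infty\{\tau_w^{\mathcal{C}}:w\in C_n\}$ is assembled purely from $\mathcal{C}{\upharpoonright}n$, the pair $\la C_n,\Theta_n\ra$, and the truth value of the predicate $Ir(\mathcal{C})$ --- none of which references finality of $C_n$ --- the derivations of conditions (b) $C_n\subsetpluseq C$ and (c) $C\vDash_M A(\Theta_n)$, together with $w\preceq C\preceq C_{n-1}$, transcribe from the original proof via the characterisation of $\vardiamond^\infty$ and the induction hypothesis; the reverse implication transcribes likewise.

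The hard part will be the remaining half of (a), namely the \emph{strict} separation $C\prec C_{n-1}$, because this is precisely the step at which the original proof appealed to $\Sigma$-finality of $C_n$: in its third case (when $C\subsetpluseq C_{n-1}$ and $C_{n-1}\vDash_M A(\Theta_n)$) it passed to the auxiliary model $\left[\ontop{C_{n-1}{\uparrow^*_M}}{C_n}\right]$ and used finality of $C_n$ there to force $Ir(\mathcal{C})$. I would replace that detour by reading $Ir(\mathcal{C})$ directly off the chain data: in this case $C_{n-1}\vDash_M A(\Theta_n)$ already yields $\Theta_n=\Theta_{n-1}^{C_{n-1}}$, and combined with $C_n\subsetpluseq C\subsetpluseq C_{n-1}$ this gives $\la C_n,\Theta_n\ra\trianglelefteq\la C_{n-1},\Theta_{n-1}^{C_{n-1}}\ra$, so the $\trianglelefteq$-clause of $Ir(\mathcal{C})$ holds intrinsically, without any finality assumption. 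When the irreflexivity clause of $Ir(\mathcal{C})$ is also active, the $\Diamond\big(\tau_w\wedge\delta(\mathcal{C}{\upharpoonright}n)\big)$ conjunct of $\tau_w^{\mathcal{C}}$ supplies, for the point $w\in C_n$ witnessing irreflexivity, a strictly accessible $v$ of the same type satisfying $\delta(\mathcal{C}{\upharpoonright}n)$ and hence lying in $C_{n-1}$; as $w$ has no same-type point above it within its own cluster, $v$ falls outside $C$, forcing $C\neq C_{n-1}$ and, by weak transitivity, $C\prec C_{n-1}$. I expect the genuinely delicate point to be confirming the separation in the degenerate sub-case where $Ir(\mathcal{C})$ fails --- which, since its $\trianglelefteq$-clause holds here, can only be through failure of the irreflexivity witness --- so that one must argue $C\prec C_{n-1}$ from the bare $\Diamond\,\delta(\mathcal{C}{\upharpoonright}n)$ conjunct alone; handling that case carefully, deprived of the finality of $C_n$ to fall back on, is where the argument needs the most attention.
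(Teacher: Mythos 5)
Your reading of the situation matches the paper's: the paper's entire proof of this corollary is the remark that it ``is as in the lemma above,'' and you have correctly isolated the one step of the proof of Lemma~\ref{lem: equivalence of the <n>phi definitions} that does not transcribe verbatim, namely the third sub-case of the strictness argument for (a) (where $C \subsetpluseq C_{n-1}$ and $C_{n-1}\vDash_M A(\Theta_n)$), which as written derives $Ir(\mathcal{C})$ from the $\Sigma$-finality of $C_n$ in $\left[\ontop{C_{n-1}{\uparrow^*_M}}{C_n}\right]$ --- a hypothesis unavailable for $\mathcal{C}\in Chain_n^*\setminus Chain_n$. Your observation that $\mathcal{C}{\upharpoonright}n$ is a genuine witnessing chain, and your derivation of the $\trianglelefteq$-clause of $Ir(\mathcal{C})$ from $C_{n-1}\vDash_M A(\Theta_n)$ (which gives $\Theta_n=\Theta_{n-1}^{C_{n-1}}$) together with $C_n\subsetpluseq C\subsetpluseq C_{n-1}$, are both correct.

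The sub-case you leave open, however, is vacuous, so no new argument is needed there. Passing from witnessing to semi-witnessing chains drops only the finality requirement on the root cluster $C_n$; clause (4) of the definition is retained for every $i<n$, in particular for $i=n-1$: if $\left[\ontop{C_{n-1}}{C_{n}}\right]\rightleftharpoons C_{n-1}$ then $\Theta_n\neq\Theta_{n-1}^{C_{n-1}}$. In your third sub-case you have already established $\Theta_n=\Theta_{n-1}^{C_{n-1}}$, hence $\left[\ontop{C_{n-1}}{C_{n}}\right]\not\rightleftharpoons C_{n-1}$; and the very implication the lemma's proof uses (``otherwise $N\rightleftharpoons C_{n-1}{\uparrow^*_M}$'') shows that, given $C_n\subsetpluseq C_{n-1}$, the only way this collapse can fail to be a bisimulation is through the existence of a point $w\in C_n$ with no point of the same propositional type above it inside $C_n$ --- precisely the irreflexivity conjunct of $Ir(\mathcal{C})$. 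So $Ir(\mathcal{C})$ holds outright in this sub-case, the conjunct $\Diamond\big(\tau_w\wedge\delta(\mathcal{C}{\upharpoonright}n)\big)$ is always present, and your argument for $C\prec C_{n-1}$ via that conjunct goes through; there is no ``bare $\Diamond\,\delta(\mathcal{C}{\upharpoonright}n)$'' case to handle. With that one observation your proof is complete and coincides with the paper's intended one, the only substantive difference being that you replace the finality appeal by clause (4) of the chain definition --- a substitution that, incidentally, would also work in the original lemma.
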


\longversion{
\begin{proof}
The proof is as in the lemma above.
\end{proof}
}

The formulas $ \la n \ra \varphi$ thus defined are the central ingredient in proving our main result.
The translation $\chi(\varphi)$ of $\varphi$ itself into $\mathcal L_{\vardiamond^ \infty}$ requires a case distinction according to whether we are evaluating on a final world or not.
Since a completely accurate definition of finality is impossible to obtain, even in $\mathcal{L}_\mu$, we will instead approximate one with the following.
The formula $split(n) $ roughly states that there are two incomparable final worlds of depth $n$ above $w$, or there is a semi-witnessing chain of depth higher than $n$ above $w$; in either case, $w$ itself cannot be a final world of depth $n$.

\begin{definition}
We define formulas
\begin{align*}
    split(n) :=
    \displaystyle\bigvee \{    \diamonddot  \delta(\mathcal{C}) \wedge \diamonddot \delta(\mathcal{C}'): \mathcal{C}, \mathcal{C}' \in Chain_n \text{ with } \la C_n, \Theta_n\ra \neq \la C_n',\Theta_n'\ra \}
     \\
    \vee \bigvee \{\alpha(\mathcal{C}_0): \mathcal{C}_0\in Chain_{n+1}^* \setminus Chain_{n+1} \}.
\end{align*}
\end{definition}

Now, suppose we have access to the valuation at $w$, a chain $\mathcal C$ witnessing that $w$ is $\Sigma$-final of depth $n$ (with $\mathcal C=\varnothing$ if $w$ is not $\Sigma$-final), as well as the set $\Theta$ of formulas $\la m\ra\varphi$ with $m<n:= dpt_\Sigma(w)$ which are true on $w$.
For such a tuple $(w,\mathcal C,\Theta,n)$, we define a formula $\chi_0(w,\mathcal{C},\Theta,n )$ stating the above-mentioned properties, depending on whether $split(n)$ holds on $w$:
\begin{align*}   
    \displaystyle\chi_0(w,\mathcal{C},\Theta,n ):= 
    \begin{cases}
        \la n \ra \top \wedge \neg \la n+1 \ra \top \wedge\\
        \neg split(n) \wedge \tau_w \wedge \diamonddot\delta(\mathcal{C}) & \text{if } \mathcal{C} \neq \varnothing \\
        \la n \ra \top \wedge \neg \la n+1 \ra \top \wedge \\
        split(n) \wedge \tau_w \wedge A(\Theta) & \text{if } \mathcal{C} = \varnothing 
    \end{cases}.
\end{align*}

We are almost ready to define $\chi(w)$.
To do so, we first define $eval(\varphi,n)$ to be the set of all triples $\la w, \mathcal{C}, \Theta\ra $ for which there exists a rooted $\Sigma$-semifinal model $(M,w)$ such that
\begin{enumerate}
    \item $w\in M$
    \item $w\vDash_M \varphi$
    \item $\Theta = \{ \overline{\la m \ra}\psi : w\vDash_M \overline{\la m \ra }\psi \text{ for } \psi \in \Sigma \wedge m < dpt_{\Sigma}(w)\} $
    \item If $w \not \in M^{\Sigma}$ then $dpt_{\Sigma} (\Theta)=n $ and $\mathcal{C} = \varnothing$
    \item If $w \in M^{\Sigma}$ then $dpt_{\Sigma} (\Theta)=n-1 $ and $\mathcal{C}$ is a witnessing chain for $M$ of depth $n$ with $\la C_w,\Theta\ra = \la C_n,\Theta_n\ra$.
\end{enumerate}

And let $eval(\varphi) := \displaystyle \bigcup_n eval(\varphi,n)$.
Since $w$ satisfies $\varphi$ if and only if we can find $\mathcal C $ and $\Theta$ such that $\la w, \mathcal{C}, \Theta\ra \in eval(\varphi)$, we may define the characteristic formula $\chi(\varphi)$ of $\varphi$ by
\begin{align*}
    \chi(\varphi) := \bigvee_{\la w,\mathcal{C},\Theta\ra \in eval(\varphi)} \chi_0 \big(w,\mathcal{C},\Theta, dpt_{\Sigma}(\Theta)\big).
\end{align*}

\begin{toappendix}
\begin{lemma}\label{lem: finality detector}
Given a finite rooted model $(M,w)$ such that $w\vDash_M \la n \ra \top \wedge \neg \la n+1 \ra \top$ the following hold:
\begin{enumerate}
    \item \label{item: is not final} If $w\vDash_M split(n)$ then $w$ is not $\Sigma$-final in $M$.
    \item \label{item: is almost final} If $w\not\vDash_M split(n)$ then for every cluster $C\in M^{\Sigma}$ of depth $n$ and for $N =  w\cup C{\uparrow^*_M}$, the following hold:
    \begin{itemize}
        \item  $\forall \varphi \in \Sigma \ w \vDash_M \varphi \Leftrightarrow w\vDash_N \varphi$
        \item $N \rightleftharpoons C{\uparrow^*_M}$.
    \end{itemize}
\end{enumerate}
\end{lemma}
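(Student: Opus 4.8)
The plan is to first unwind the hypothesis $w\vDash_M \la n \ra\top\wedge\neg\la n+1 \ra\top$. By Lemma \ref{lem: equivalence of the <n>phi definitions} and the definition of $\overline{\la n \ra}$, this says exactly that the maximal $\Sigma$-depth of a $\Sigma$-final point lying weakly above $w$ (i.e. with $w\sqsubseteq v$) equals $n$; note that a final point of depth $>n$ above $w$ would, through the cluster $B$ witnessing its depth, force a final point of depth exactly $n+1$ above $w$, contradicting $\neg\la n+1 \ra\top$. A short computation with $dpt_{\Sigma}$ then yields the dichotomy on which everything hinges: if $w$ is $\Sigma$-final then $dpt_{\Sigma}(w)=n$ and $C_w$ is the unique $\Sigma$-final cluster of depth $n$ with $w\preceq C_w$; and if $w$ is not $\Sigma$-final then $dpt_{\Sigma}(w)=n+1$, witnessed by a $\Sigma$-final cluster of depth $n$ lying strictly above $w$. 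I would prove item \ref{item: is not final} by contraposition and item \ref{item: is almost final} directly, splitting on whether $w$ is $\Sigma$-final.

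For item \ref{item: is not final}, suppose $w$ is $\Sigma$-final, so $dpt_{\Sigma}(w)=n$, and I would rule out each disjunct of $split(n)$. For the first disjunct, Lemma \ref{lem: equivalence of the <n>phi definitions} gives that $w\vDash_M\diamonddot\delta(\mathcal{C})$ forces $w\preceq C_n$ with $C_n$ a $\Sigma$-final cluster of depth $n$; by the dichotomy $w\prec C_n$ is impossible, so the realised top cluster must be $C_w$, and since $\Theta_n$ is determined by its cluster (the footnote to Theorem \ref{thm: depth with cluster equivalence}) any two chains witnessed at $w$ share the pair $\la C_w,\Theta^{C_w} \ra$, contradicting $\la C_n,\Theta_n \ra\neq\la C_n',\Theta_n' \ra$. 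For the second disjunct, the corollary to Lemma \ref{lem: equivalence of the <n>phi definitions} (with $n$ shifted to $n+1$) gives that $w\vDash_M\alpha(\mathcal{C}_0)$ for $\mathcal{C}_0\in Chain_{n+1}^*\setminus Chain_{n+1}$ produces a cluster $C$ with $w\preceq C\prec C_n$, where $C_n$ is the $\Sigma$-final bottom of the witnessing part and hence of depth $n$; weak transitivity then gives $w\prec C_n$, so $dpt_{\Sigma}(w)\geq n+1$, a contradiction. Thus $w\not\vDash_M split(n)$, the contrapositive of item \ref{item: is not final}.

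For item \ref{item: is almost final}, assume $w\not\vDash_M split(n)$ and fix a $\Sigma$-final cluster $C$ of depth $n$ reachable from $w$; one exists by $\la n \ra\top$, and by the failure of the first disjunct all such have a common pair. Since $w\preceq C$, weak transitivity shows $w$ reaches all of $C{\uparrow^*_M}$, so in $N=w\cup C{\uparrow^*_M}$ both $w$ and every $c\in C$ have accessible set exactly $C{\uparrow^*_M}$. The decisive observation is that the failure of the second disjunct forbids any triggered semi-witnessing chain of depth $n+1$: if $w$ were not $\Sigma$-final, then $C_w$ would be a non-$\Sigma$-final cluster of depth $n+1$ sitting below the depth-$n$ final structure; the levels above $C_w$ are an honest witnessing chain of that final structure (Lemma \ref{lem: witnessing chains}), so clause \ref{item: describable chain condition} can only fail at the root, and were it to hold there as well we would obtain $\mathcal{C}_0\in Chain_{n+1}^*\setminus Chain_{n+1}$ with $w\vDash_M\alpha(\mathcal{C}_0)$ by the corollary, contradicting $\neg split(n)$. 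Hence clause \ref{item: describable chain condition} fails at the root, which (in the form used in the proof of Lemma \ref{lem: witnessing chains}) is exactly $\big[\ontop{C{\uparrow^*_M}}{C_w} \big]\rightleftharpoons C{\uparrow^*_M}$ with the matching $\Theta$; this bisimilar absorption supplies a point $c\in C$ with which $w$ may be identified, and the identity on $C{\uparrow^*_M}$ together with $\la w,c \ra$ is readily checked to be a bisimulation, giving $N\rightleftharpoons C{\uparrow^*_M}$. The case where $w$ is $\Sigma$-final is immediate, as then $C_w$ is itself the depth-$n$ cluster, $N=w{\uparrow^*_M}$, and the bisimulation is the identity. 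Finally the first bullet follows from $N\rightleftharpoons C{\uparrow^*_M}$ together with invariance of $\mathcal{L}_\mu$ under bisimulation and under generated submodels: the theory of $w$ in $M$ depends only on $w{\uparrow^*_M}$, which Theorem \ref{thm: in between model and its final part} reduces to $w$ with the $\Sigma$-final part above it, and the absorption identifies this with $N$.

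The main obstacle I anticipate is precisely the decisive step of item \ref{item: is almost final}: converting the syntactic statement $w\not\vDash_M split(n)$ into the structural absorption $N\rightleftharpoons C{\uparrow^*_M}$. One must argue that the absence of a triggered semi-witnessing chain of depth $n+1$ can be caused only by the failure of the describability clause \ref{item: describable chain condition} at the root $C_w$ — and not by some other defect higher up — and that this failure is genuinely equivalent to bisimilar absorption. I expect this to rely on a careful reading of Lemma \ref{lem: witnessing chains} and its semifinal analogue, on the corollary to Lemma \ref{lem: equivalence of the <n>phi definitions}, and on Theorem \ref{thm: depth with cluster equivalence} to pin down the relevant $\Theta$'s, with attention to the off-by-one in the chain indexing and to the fact that $\Sigma$, being closed under $\diamonddot$, $sub^*$ and $\neg$, is rich enough that $\Sigma$-equivalence of $w$ and $c$ over these finite structures already entails bisimilarity.
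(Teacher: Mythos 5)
Your proposal follows essentially the same route as the paper: item \ref{item: is not final} is obtained by reading each disjunct of $split(n)$ through Lemma~\ref{lem: equivalence of the <n>phi definitions} as forcing $dpt_{\Sigma}(w)>n$, which is incompatible with $w$ being $\Sigma$-final given $\neg\la n+1\ra\top$, and item \ref{item: is almost final} by converting $\neg split(n)$ into uniqueness of the depth-$n$ satisfaction pair above $w$ together with the absorption $\left[\ontop{C'}{C_w}\right]\rightleftharpoons C'$ arising from the failure of the describability clause at the root. You in fact supply more detail than the paper's own terse argument, which stops after listing these two structural consequences of $\neg split(n)$ and leaves the derivation of the two bullets implicit.
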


\begin{proof}
Claim \ref{item: is not final} is derived from the claims proved in Lemma \ref{lem: equivalence of the <n>phi definitions} as $split(n)$ holds if one of the following holds:
\begin{enumerate}[label=(\alph*)]
    \item There are two satisfaction pairs in $M$: $\la C,\Theta \ra \neq \la C', \Theta' \ra \in Sat_n$ with $C,C'$ being above $w$ and hence $dpt_{\Sigma}(w)>n$.
    \item There is a semi-witnessing chain $\mathcal{C'}$ of $M$ of depth $n+1$ above $w$ and hence $dpt_{\Sigma}(w) \geq n+1$.
\end{enumerate}
For Claim \ref{item: is almost final} assume that $w$ is not final. Since $w\not \vDash_M split(n)$, none of the above two cases hold, therefore
\begin{itemize}
    \item Every satisfaction pair of depth $n$ in $M$ is the same.
    \item For every semi-satisfaction pair $\la C,\Theta \ra$ of $M$ above $w$ with $dpt_{\Sigma}(C)=n+1$ and every satisfaction pair $\la C',\Theta'\ra$ of $M$ with $dpt_{\Sigma}(C')=n$ and $C' \succ C$, it holds that $\left[ \ontop{C'}{C}  \right] \rightleftharpoons C' $ and $\Theta = \Theta'^{C'}$.
\end{itemize}

\end{proof}
\end{toappendix}

\begin{theoremarep}\label{thm: characteristic formula}
Given a formula $\varphi$ and a finite rooted model $(M,w)$, we have that $w \vDash_M \varphi \Leftrightarrow w \vDash_M \chi(\varphi)$.
\end{theoremarep}
\begin{proof}
Let $n = dpt_{\Sigma}^M(w)$ and $\Theta = \{ \overline{\la m \ra}\psi : w\vDash_M \overline{\la m \ra }\psi \text{ for } \psi \in \Sigma \wedge m < dpt_{\Sigma}(w)\}$. Assume first that $w \vDash_M \varphi$ and consider the following cases.
\begin{enumerate}
    \item Suppose that $w \in M^{\Sigma}$, then $w \vDash_M \neg split(n)$  and $w\ \vDash_M \la n \ra \top \wedge \neg \la n+1 \ra \top$. Since $w \vDash_M \varphi$ then by the definition of $eval$ and the fact that $w \in M^{\Sigma}$, we have $\la w, \mathcal{C} , \Theta\ra \in eval (\varphi,n)$ for some witnessing chain for $M$ rooted at $w$ and so $ w\vDash_M \chi_0 (w, \mathcal{C},\Theta, n)$.
    \item Assume now that $w \not \in M^{\Sigma}$, then $w\vDash_M \la n-1 \ra \top \wedge \neg\la n \ra \top$ and
    \begin{enumerate}
        \item If $w\vDash_M split(n-1)$ then $(M,w)$ is a witness of $\la w , \varnothing,\Theta\ra \in eval(\varphi,n)$ and so $w \vDash_M \chi_0 (w, \varnothing,\Theta, n-1)$.
        \item If $w \not \vDash_M split(n-1)$, let $C$ a $\Sigma$-final cluster with $dpt_{\Sigma}(C)=n-1$. By Lemma \ref{lem: finality detector}, $w$ satisfies the same $\Sigma$-formulae as some $u\in C$ and since $u\vDash_M \tau_w \wedge \diamonddot \delta(\mathcal{C})$ for some $\mathcal{C}$, then so does $w$.
    \end{enumerate}
\end{enumerate}
Assume now that $w\vDash_M \chi (\varphi)$ and consider the same cases as before:
\begin{enumerate}
    \item If $w \in M^{\Sigma}$, then $w\vDash_M \chi_0 (w, \mathcal{C},\Theta, n) $ for some witnessing chain $C$ and so $C_w = C_n$ and by Theorem \ref{thm: depth with cluster equivalence} $w\vDash_M \varphi$.
    \item If $w\not \in M^{\Sigma}$, then $w\vDash_M \chi_0 (w ,\mathcal{C},\Theta, n-1) $ and
    \begin{enumerate}
        \item If $w\vDash_M split(n-1)$ then since $\la w , \varnothing,\Theta\ra \in eval(\varphi,n)$, by Theorem \ref{thm: depth with cluster equivalence} $w\vDash_M \varphi$.
        \item If $w \not \vDash_M split(n-1)$, then since $w\vDash \diamonddot \delta (\mathcal{C})$, there is a $\Sigma$-final cluster $C$ above $w$ that is the root of the witnessing chain $\mathcal{C}$ in $M$. By Lemma \ref{lem: finality detector} $w$ satisfies the same $\Sigma$-formulae as some $u\in C_{n-1}$, and since $u\vDash_M \diamonddot \delta (\mathcal{C})$, then $w\vDash_M \varphi$.
    \end{enumerate}
\end{enumerate}
\end{proof}

In view of~\cite{BBFMu}, this also applies to the class of topological spaces.
Moreover, $\vardiamond^\infty \Gamma$ can be expressed by a first order formula in all path-finite weakly transitive frames, where path-finite means that the ordering $\prec$ and its inverse $\prec ^{-1}$ are well-founded. So we get a first order expressibility of $\mathcal{L}_\mu$ in frames analogous to the ones in \cite{DO}.
Thus we obtain the following.

\begin{theoremarep}
$\mathcal{L}_{\mu} \equiv \mathcal{L}_{\vardiamond^\infty}$ over the class of topological spaces and the class of weakly transitive frames, and so $\mathcal{L}_\mu \subset {\sf FOL}{/}{\rightleftharpoons}$ over finite and path-finite weakly transitive frames.
\end{theoremarep}

\begin{proof}
    Immediate from the above remark, the finite model property of $\mathcal{L}_\mu$ over {\sf wK4} frames and from Theorem \ref{thm: characteristic formula}.
\end{proof}

In-fact, we fail to get a characterization theorem for the $\mu$ calculus over finite and path-finite weakly transitive frames. We show this via a bisimulation invariant formula of {\sf FOL} whose modal class is not definable via a $\mathcal{L}_\mu$ formula.

\begin{theoremarep}
    $\mathcal{L}_\mu \subsetneq {\sf FOL}{/}{\rightleftharpoons}$ over finite and path-finite weakly transitive frames.
\end{theoremarep}
\begin{proof}
    Consider the following formulae: 
    \begin{itemize}
        \item $\psi (x) := P(x) \to \exists y \sqsupset x \, \exists z \sqsupset x \, P(y) \wedge \neg P(z)$
        \item $\sigma_0 (x) := \neg P(x) \to \forall y \sqsupset x  \,\big(\neg P(y) \to \neg y \sqsubset x\big) $
        \item $\sigma_1 (x) := \neg P(x) \to  \exists y \, \exists z \, \big(x \sqsubset y \wedge y \sqsubset z \wedge z \sqsubset x \wedge P(y) \wedge P(z)\big)$
    \end{itemize}
    and let $\varphi(w) := P(w) \wedge \forall x \sqsupseteq w \big(\psi(x) \wedge \sigma_0 (x) \wedge \sigma_1 (x)\big)$.\footnote{Though we use the equality symbol in the definition of $\varphi$, it can be easily omitted.} We will first show that $\varphi(w)$ is invariant under bisimulations over finite frames. So let $M,w \rightleftharpoons N,w'$ be finite frames such that $M \vDash \varphi(w)$. We will show that so does $N$.
    
    Assume that $N \not \vDash \varphi(w)$, thus $\exists x' \sqsupseteq w$ such that $N \not \vDash \psi(x') \wedge \sigma_0 (x') \wedge \sigma_1 (x')$ and let $x \in M$ such that $M,x \rightleftharpoons N,x'$.
    \begin{description}
        \item[Case 1:] $N \vDash P(x')$ and so $N\vDash \neg \psi (x')$ thus $N\vDash \forall y \sqsupset x' \, \forall z \sqsupset x' \big(P(y) \leftrightarrow P(z)\big)$ however since $M \vDash \psi(x)$ there are $y,z \sqsupset x$ where $P(y) \wedge P(z)$ leading to a contradiction since $M,N$ are bisimilar.
        \item[Case 2:] $N\vDash \neg P(x')$ and $N \vDash \neg \sigma_0 (x')$. Thus $N \vDash \exists y' \sqsupset x' \, \big(x' \sqsupset y' \wedge \neg P(y)\big) $, however since $M \vDash \sigma_0 (v)$ for all $v \in M$, by bisimilarity we can then find an infinite path in $M$ of points $z$ such that $\neg P(z)$, contradicting our choice of models.
        \item[Case 3:] $N\vDash \neg P(x')$ and $N \vDash \neg \sigma_1 (x')$. Assuming that $N \vDash \sigma_0(x')$, we can conclude that there is at most one $y \in C_{x'}$ such that $N \vDash P(y)$. Then following the path $x \to y \to z$ in $M$, we can find some $y' \sqsupset x'$ in $N$ such that $N,y' \rightleftharpoons M,x$. From bisimilarity, this means there is some $y \sqsupset x$ in $M$ such that $M,x \rightleftharpoons M,y$. However since $M \vDash \varphi(w)$, this can only imply that there is an infinite $\prec$ path in $M$, a contradiction.
    \end{description}
    Now we will define a class of rooted models $\{ M_i, w_i\}_{i < \omega}$ such that $M_i \vDash \varphi(w_i)$ and if $N \subseteq M_i$ and $N \vDash \varphi(w_i)$ then there is some $j \leq i$ such that $N,w_i \rightleftharpoons M_j , w_j$. The models are defined as follows:
    \begin{enumerate}
        \item $M_0 = \la C , \sqsubset, \| \cdot \| \ra $, where $C= \{ a,b,c\}$, $\sqsubseteq = \{ \la x, y\ra : x \neq y\}$ and $\| p\| = \{ b,c\}$. We let $w_0=b$.
        \item $M_{i+1} = \left[ \ontop{M_i}{C} \right]$, whose root $w_i$ is the point $b$ of the bottom cluster $C$.
    \end{enumerate}
    The models $M_i$ are finite chains of the cluster $C$ and it is easy to show that they satisfy the above two conditions. Now, working towards a contradiction, suppose there is some formula $\xi$ of $\mathcal{L}_\mu$ equivalent to $\varphi(x)$ and let $\Sigma = Cl_{\diamonddot, sub^*,\neg}(\xi)$ and $n = \left| \Sigma \right|+2$. But then, not every cluster of $M_n$ can be $\Sigma$-final and by Theorem \ref{thm: in between model and its final part}, we can find some $N \subseteq M_n$ such that $w_n \vDash_N \xi$ but $N \not \vDash \varphi (w_n)$.
\end{proof}

\longversion{
In addition to that, we get that formulas of $\mathcal{L}_\mu$ are equivalent to formulae without nested fixed points i.e. formulae that are not equal to formulae of the form $\rho_0 x. \varphi(\rho_1 y. \psi)$ with $x$ occurring in $\psi$ and $\rho_i$ being $\nu$ or $\mu$. Hence our work has as a corollary the following result by \cite{alternation}:
\begin{theorem}
    The $\mu$-calculus collapses to its alternation-free fragment over {\sf wK4} frames.
\end{theorem}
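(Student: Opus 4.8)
The plan is to read off the collapse directly from the translation $\mathcal L_\mu \equiv \mathcal L_{\vardiamond^\infty}$ established above, exploiting the fact that Gougeon's tangle operator is a \emph{single} greatest fixed point whose bound variable is syntactically insulated from its parameters. Concretely, given $\varphi\in\mathcal L_\mu$, the preceding theorem furnishes an equivalent formula $\psi\in\mathcal L_{\vardiamond^\infty}$; I would then translate $\psi$ back into $\mathcal L_\mu$ by replacing every occurrence of $\vardiamond^\infty\Gamma$ with its defining $\nu$-formula
\[
\nu x.\bigvee_{\varphi_i\in\Gamma}\Big(\diamonddot(\varphi_i\wedge x)\vee\bigwedge_{\ontop{\varphi_j\in\Gamma}{j\neq i}}\Diamond(\varphi_j\wedge x)\Big),
\]
working outermost-tangle first, so as to obtain a pure $\mu$-calculus formula $\psi^\sharp$ equivalent to $\varphi$ over $\sf wK4$ frames.

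The key structural observation is the side condition built into the definition of $\vardiamond^\infty\Gamma$: the bound variable $x$ does not occur free in any $\varphi_i\in\Gamma$. Consequently, in the defining $\nu$-formula the body surrounding $x$ is assembled only from $\wedge,\vee,\Diamond,\diamonddot$ together with the parameters $\varphi_i$, and any further fixed-point operators arising from tangles nested inside some $\varphi_i$ lie entirely within that $\varphi_i$, hence outside the reach of $x$. Thus no variable bound by an outer fixed point can occur free inside an inner fixed point. I would make this precise by induction on the tangle-nesting depth of $\psi$: each unfolding introduces exactly one fresh $\nu$-binder $x$, and by the induction hypothesis the translated parameters $\varphi_i^\sharp$ already contain no such crossing occurrence; since $x\notin\mathrm{free}(\varphi_i^\sharp)$, the new binder cannot create one either.

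Finally I would put $\psi^\sharp$ into negation normal form. The only delicate point is that negation converts $\nu$ into $\mu$ via $\neg\nu x.\theta(x)=\mu x.\neg\theta(\neg x)$; but this rewriting renames nothing and moves no binder across another, so it preserves the invariant that no bound variable reaches into a nested fixed point. A formula enjoying this invariant is in particular alternation-free, since any genuine $\mu/\nu$ alternation requires precisely such a crossing free occurrence. Hence $\varphi$ is equivalent over $\sf wK4$ frames to an alternation-free formula, which is the desired collapse.

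I expect the main obstacle to be the bookkeeping in the negation-normal-form step: one must verify that pushing negations through the unfolded tangles — interleaving the De Morgan dualities for $\diamonddot/\boxdot$ and $\Diamond/\Box$ with the $\nu/\mu$ dualization — never yields a subformula in which an outer bound variable becomes captured within an inner binder. This is routine but demands stating the invariant carefully and checking it survives each rewrite rule; one should also confirm that $\psi^\sharp$ is well defined up to renaming of bound variables, so that the freshness of each introduced binder is genuinely maintained throughout the induction.
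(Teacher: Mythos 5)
Your proposal is correct and is essentially the paper's own argument: the paper derives the collapse as an immediate corollary of $\mathcal L_{\mu}\equiv\mathcal L_{\vardiamond^\infty}$, noting exactly your key point that unfolding each $\vardiamond^\infty\Gamma$ introduces a single $\nu$-binder whose variable does not occur in the parameters $\varphi_i\in\Gamma$, so no bound variable of an outer fixed point reaches into an inner one and the resulting $\mathcal L_\mu$ formula is alternation-free. The negation-normal-form bookkeeping you flag is routine and does not affect the argument.
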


In view of \cite{BBFMu}, these results lift to topological spaces, with the caveat that $\sf FOL$ does not make sense in this setting.

\begin{theorem}
    $\mathcal{L}_{\mu} \equiv \mathcal{L}_{\vardiamond^\infty}$ over the class of all topological spaces.
\end{theorem}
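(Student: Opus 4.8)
The plan is to transfer the frame-level equivalence $\mathcal{L}_{\mu} \equiv \mathcal{L}_{\vardiamond^\infty}$ over $\sf wK4$ frames to the topological setting by invoking the completeness result of \cite{Baltag2021TheTM}, which identifies the $\mu$-calculus theory of all topological spaces with that of finite weakly transitive frames. The key observation is that the tangle operator $\vardiamond^\infty\Gamma$ is by definition a $\mu$-calculus formula (a greatest fixed point of a positive operator), so $\mathcal{L}_{\vardiamond^\infty}$ is \emph{syntactically} a fragment of $\mathcal{L}_\mu$; the nontrivial content is the reverse inclusion, namely that every $\mathcal{L}_\mu$ formula has an equivalent in $\mathcal{L}_{\vardiamond^\infty}$.

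First I would note that the inclusion $\mathcal{L}_{\vardiamond^\infty} \subseteq \mathcal{L}_\mu$ holds over \emph{any} class of structures interpreting the operators, and in particular over topological spaces, since the topological semantics interprets $\Diamond$ as the derivative $d$ and evaluates $\vardiamond^\infty\Gamma$ through its definitional unfolding as a $\nu$-formula. Second, for the converse, I would take an arbitrary $\varphi \in \mathcal{L}_\mu$ and let $\psi \in \mathcal{L}_{\vardiamond^\infty}$ be an equivalent formula over $\sf wK4$ frames, whose existence is guaranteed by the preceding frame-level theorem (obtained from Theorem~\ref{thm: characteristic formula} together with the finite model property). The goal is then to argue $\varphi \equiv \psi$ over all topological spaces as well.

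The crucial step is the following reduction: two $\mu$-calculus formulas (which include all tangle formulas) are topologically equivalent if and only if they are equivalent over all finite weakly transitive frames. This is precisely the transfer theorem of \cite{Baltag2021TheTM}, which extends Esakia's correspondence between the topological and the $\sf wK4$-frame semantics from basic modal logic to the full $\mu$-calculus. Concretely, $\varphi \leftrightarrow \psi$ is valid over every topological space iff it is valid over every finite $\sf wK4$ frame; and the latter holds by our frame-level equivalence $\varphi \equiv \psi$. Hence $\varphi$ and $\psi$ have the same extension in every topological space, establishing $\mathcal{L}_\mu \equiv \mathcal{L}_{\vardiamond^\infty}$ topologically.

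The main obstacle is ensuring that the transfer result of \cite{Baltag2021TheTM} applies uniformly to \emph{all} $\mu$-calculus formulas involved, including the tangle formulas $\vardiamond^\infty\Gamma$, and not merely to formulas in some restricted syntactic shape; since tangles are definable in $\mathcal{L}_\mu$ this is immediate, but it must be stated carefully. A secondary point, already flagged in the excerpt, is that the $\sf FOL$ characterisation does not lift, because first-order logic has no natural interpretation over arbitrary topological spaces; accordingly the topological statement is only the expressivity equivalence $\mathcal{L}_\mu \equiv \mathcal{L}_{\vardiamond^\infty}$, with the $\sf FOL{/}{\rightleftharpoons}$ clause deliberately dropped. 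Given these caveats, the proof is essentially a one-line appeal to \cite{Baltag2021TheTM} composed with the frame-level theorem.
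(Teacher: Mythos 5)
Your proposal is correct and takes essentially the same route as the paper: the paper offers no detailed argument here, simply remarking that the frame-level equivalence lifts ``in view of \cite{Baltag2021TheTM}'', which is exactly the transfer you spell out (validity of $\varphi\leftrightarrow\psi$ over finite $\sf wK4$ frames coincides with topological validity since tangle formulas are themselves $\mu$-calculus formulas). Your elaboration of the two inclusions and the caveat about dropping the $\sf FOL$ clause matches the paper's intent precisely.
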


}

We can obtain a rough estimate of $|\chi(\varphi)| \leq 2^{(14|\varphi|+1)2^{14|\varphi|+6}}$.
This upper bound also applies in the transitive setting, whereas it is more difficult to extract from the methods of~\cite{DO}.
This bound is reasonably close to the known lower bound, which is exponential~\cite{Iliev}.
Finding the optimal size of a translation remains an interesting open problem.

\begin{toappendix}

\begin{theorem}
    Let $\varphi \in \mathcal{L}_\mu$ and $|\varphi|=n$ be the total number of symbols that appear in $\varphi$. Then $|\chi(\varphi)|\leq 2^{(14n+1)2^{14n+6}}$.
    Hence for every formula $\varphi$ of $\mathcal L_\mu$ there is a formula of $\mathcal L_{\vardiamond^\infty}$ of size bounded by a double exponential function on $|\varphi|$ and equivalent to $\varphi$ over the class of weakly transitive frames as well as the class of all topological spaces. 
\end{theorem}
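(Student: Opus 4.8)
The plan is to establish the size bound by tracking the sizes of the components $\langle n\rangle\psi$, $\alpha(\mathcal C)$, $\beta(\mathcal C)$, $\gamma(\mathcal C)$, $\delta(\mathcal C)$, $split(n)$ and $\chi_0$ through the mutual recursion that defines them, controlling the combinatorial explosion by first bounding the number of syntactic objects (propositional types, sets $\Theta$, satisfaction pairs, witnessing chains) that index the large disjunctions and conjunctions. The first ingredient is a linear bound $|\Sigma|\le 14n$: the operator $sub^*$ produces only $O(n)$ formulas from $\varphi$, closing under $\neg$ at most doubles this, and closing under $\diamonddot$ contributes only a constant factor, since $\diamonddot$ is an $\mathsf{S4}$ modality and $\mathsf{S4}$ has at most seven non-equivalent modalities; a direct count over the clauses of $sub^*$ (including the $\neg p$ and fresh-variable cases) yields the constant $14$. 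From finality I would then bound the maximal relevant $\Sigma$-depth: along any $\prec$-chain contained in the $\Sigma$-final part, consecutive clusters must carry distinct $\equiv_\Sigma$-types, so the depth is at most the number of such types, hence at most $2^{|\Sigma|}\le 2^{14n}$.

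With these two bounds in hand I would count the indexing objects. There are at most $2^{|\Sigma\cap\mathbb P|}\le 2^{14n}$ propositional types $\tau_w$, and hence at most $2^{2^{14n}}$ clusters up to $\rightleftharpoons$, since a cluster's bisimulation type is fixed by the set of $\tau_w$ it realises. Each $\Theta$ is a subset of $\{\overline{\langle m\rangle}\psi : \psi\in\Sigma,\ m<2^{14n}\}$, a set of size at most $14n\cdot 2^{14n}$, so there are at most $2^{(14n)2^{14n}}$ possible $\Theta$'s, and correspondingly at most doubly-exponentially many satisfaction pairs, semi-satisfaction pairs, and witnessing chains of every depth. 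This count is exactly where the outer double exponential originates, and it is already of the claimed shape up to the additive constants appearing in the exponent $2^{14n+6}$ and the factor $14n+1$.

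Finally I would set up the size recurrence, using that the construction is well founded in the $\Sigma$-depth. A depth-$n$ formula $\alpha(\mathcal C)$ refers, through $\tau_w^{\mathcal C}$, only to $\delta(\mathcal C{\upharpoonright}n)$ of depth $n-1$ and to $A(\Theta_n)$, whose constituents $\langle m\rangle\psi$ all carry $m<n$; and $\beta(\mathcal C)$, $\gamma(\mathcal C)$ are Boolean combinations of $\alpha(\mathcal C')$ for other depth-$n$ chains $\mathcal C'$, each of which in turn invokes only formulas of depth strictly below $n$. Writing $T(n)$ for the maximal size of a depth-$n$ formula produced, each level is a Boolean combination or $\vardiamond^\infty$-tangle over at most $2^{2^{O(n)}}$ indices of formulas of depth $<n$, so $T(n)\le 2^{2^{O(n)}}\cdot T(n-1)$; unrolling over the at most $2^{14n}$ depths gives $T\le\bigl(2^{2^{O(n)}}\bigr)^{2^{14n}}=2^{2^{O(n)}}$. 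Since $\chi(\varphi)$ is a disjunction of at most doubly-exponentially many $\chi_0$'s, each of size at most $T$ plus the size of $split$, the same double-exponential bound survives, and keeping the constants explicit throughout lands on $|\chi(\varphi)|\le 2^{(14n+1)2^{14n+6}}$. The second assertion is then immediate from Theorem~\ref{thm: characteristic formula} together with the equivalence of $\mathcal L_\mu$ and $\mathcal L_{\vardiamond^\infty}$ over $\mathsf{wK4}$ frames and over all topological spaces.

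The main obstacle I expect is the careful verification that this recurrence is genuinely well founded with the stated constants: one must confirm that every occurrence of $\langle m\rangle\psi$ inside $A(\Theta_n)$ has $m$ strictly below the current depth, that the same-depth references in $\beta$ and $\gamma$ to $\alpha(\mathcal C')$ do not introduce a cycle (because each $\alpha(\mathcal C')$ bottoms out at strictly lower depths), and that the per-level multiplicative blow-up is only double-exponential rather than worse. Coupled with this is the precise bookkeeping needed to reach the exact constant $14n$ in the bound on $|\Sigma|$ and the exponents $2^{14n+6}$, rather than merely asserting a qualitative double-exponential estimate.
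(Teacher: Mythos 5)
Your strategy coincides with the paper's: bound $|\Sigma|$ by $14n$, count the indexing objects (propositional types, clusters up to bisimulation, sets $\Theta$, satisfaction pairs, witnessing chains), then run a size recurrence for $\la k\ra\psi$, $\alpha$, $\delta$ indexed by depth and unroll it. The well-foundedness worry you flag is resolved exactly as you suspect: every $\la m\ra\psi$ occurring in $A(\Theta_n)$ has $m<n$, and the same-depth references in $\beta(\mathcal C),\gamma(\mathcal C)$ to $\alpha(\mathcal C')$ bottom out through $\delta(\mathcal C'{\upharpoonright}n)$ at depth $n-1$, so there is no cycle.

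The genuine gap is your bound on the maximal relevant $\Sigma$-depth. You bound it by the number of $\equiv_\Sigma$-types, i.e.\ $2^{14n}$, and the justification offered (consecutive clusters carry distinct types) does not even yield that: consecutive distinctness alone does not bound the length of a chain by the number of types. More importantly, an exponential depth bound is fatal for the stated constant. The recurrence gives $|\delta(k)|\le 2^{k\cdot 2^{14n+4}}$, so letting $k$ range up to $2^{14n}$ only yields $|\chi(\varphi)|\le 2^{2^{28n+O(1)}}$ --- still doubly exponential, so the qualitative half of the theorem survives, but not the claimed $2^{(14n+1)2^{14n+6}}$. What the paper uses (implicitly, via the union $\bigcup_{k\le m}Chain_k$ with $m=14n$) is that the $\Sigma$-depth of the $\Sigma$-final part is at most $|\Sigma|=14n$: if $C\prec C'$ are both $\Sigma$-final, then some $w\in C$ is final for a formula $\varphi\in\Sigma$ which is consequently not satisfied at any point strictly above $C$, so the set of $\Sigma$-formulas satisfied weakly above a final cluster strictly shrinks as one moves up a $\prec$-chain; hence such chains have length at most $|\Sigma|$. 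Substituting this linear depth bound into your recurrence is precisely what produces the factor $14n+1$ in the exponent. A secondary slip: a cluster's bisimulation class is not determined by the set of propositional types it realises --- for each type one must distinguish ``absent'', ``realised by a reflexive point or by at least two points'', and ``realised by a single irreflexive point'', whence the paper's count $3^{2^{|P|}}$ rather than your $2^{2^{|P|}}$. This does not threaten the double exponential, but it matters if the constants are to be kept honest.
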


\begin{proof} 
The number of propositional constants that show up in $\varphi$ is $|P|\leq \ceil{\dfrac{n}{2}} \leq n$; and since {\sf S4} only has finitely many induced modalities \cite{Chagrov1997ModalL}, we know in particular that the induced $\Sigma = Cl_{\diamonddot, sub^*,\neg}(\varphi)$ has cardinality $|\Sigma|\leq 14 n =: m$. Then the number of different propositional valuations a given world can have is $2^{|P|} \leq 2^n$ and thus the number of different non-bisimilar clusters of {\sf wK4} frames is $|Sat^0_0|\leq 3^{2^{|P|}} \leq 3^{2^n}$. The number of sets of the form $\{ \overline{\la i \ra }\psi : \psi \in \Sigma \wedge i< dpt_{\Sigma}(w) \wedge  w\vDash_M \overline{\la i \ra }\psi \} $ for $\Sigma$-semifinal rooted finite models $(M,w)$ of depth $k$ is  $|Sat^{*1}_k|\leq 2^m (k+1)$ and so $|Sat_k| \leq |Sat^*_k| \leq |Sat^{*0}_k||Sat^{*1}_k| \leq  |Sat^{0}_0||Sat^{*1}_k|\leq 3^{2^n} 2^m (k+1) $. The number of (semi-)witnessing chains of depth $k$ is then $|Chain_k| \leq |Chain^*_k| \leq \prod_{0 \leq i \leq k} |Sat^*_i| \leq 3^{(k+1)2^{n}} 2^{m(k+1)} (k+1)^{k+1} \leq 2^{2^{m+2}} $.

For the formulas used to define the modal $\la k \ra \psi$'s, we will for the interest of clarity use a slightly modified notation when calculating the upper bound of the number of symbols showing up. Define $|\la k \ra| := sup \{ |\la k \ra \psi| : \psi \in \Sigma \}  $ and similarly $|\alpha(k)| := sup \{ | \alpha(\mathcal{C})| : \mathcal{C} \in Chain^*_k\}$ and $|\delta(k)| := sup \{ | \delta(\mathcal{C})| : \mathcal{C} \in Chain^*_k\}$. Then for given $k$, the definitions give
\begin{itemize}
    \item $|\la k \ra | \leq 4|Chain_k| \cdot |\delta(k)| \leq 2^{2^{m+2}+2}|\delta (k)|$
    \item $|\alpha(k)| \leq 1+ 2^n \big( 2^{n+3} + 4 + \sum_{i<k} (|\la i \ra | \cdot 3m ) + |\delta(k-1)| \big) $
    \item  $\displaystyle \begin{aligned}[t]
     &|\delta(k)| \leq 3|Chain^*_k|\cdot|\alpha(k)| \leq 
     3\cdot  2^{2^{m+2}} |\alpha(k)| \\
     &\leq 
     3\cdot  2^{2^{m+2}} \cdot
     2^{n+1}\big( 2^{n+3}+ 2^{2^{m+2}+2}\cdot 
     \sum_{i<k} (|\delta(i)| \cdot 3m)
     + |\delta(k-1)|\big)     \\
     &\leq 2^{2^{m+2}} \cdot 2^{n+7}\big(2^{2^{m+3}}\cdot (|\delta(k-1)|+1)\big) \\
     &\leq 2^{k\cdot 2^{m+4}}
    \end{aligned}$
    
\end{itemize}
Then we can evaluate an upper bound of $|split(k)| \leq 6|Chain_{k+1}^*|  |\delta(k)| \leq 6\cdot 2^{2^{m+1}} \cdot 2^{k\cdot 2^{m+4}} \leq 2^{(k+1)\cdot 2^{m+4}} $ and finally $|\chi (\varphi)| \leq 
2^{n} \cdot |\bigcup_{k\leq m} Chain_k| \cdot |Sat^{*1}_m| \cdot (2|\la m+1 \ra| +  2^{n+3} + |split(m)| + 2|\delta(m)|) \leq
2^n \cdot (m+1)\cdot 2^{2^{m+2}} \cdot 2^m \cdot (m+1) \cdot (2^{2^{m+4}+2^{m+2}+3} + 2^{n+3 } + 2^{(m+1)\cdot 2^{m+4}} + 2^{m\cdot 2^{m+4}}) \leq 2^{(m+1)2^{m+6}} $.
   
\end{proof}

\end{toappendix}



\section{Conclusion}

We have shown that the topological $\mu$-calculus is equi-expressive to its tangled fragment, provided it's defined in a way that better captures its intended behaviour on arbitrary topological spaces while retaining its original value on metric spaces and other `nice' topological spaces.
Given the much more transparent syntax of tangle logic, this suggests that the latter is more suitable for applications in spatial KR than the full $\mu$-calculus.

This begs the question of whether the topological $\mu$-calculus, or its tangled fragment, can be enriched in a natural way to obtain the full expressive power of the bisimulation-invariant fragments of $\sf FOL$ or $\sf MSO$.
Perhaps something in the spirit of hybrid logics can bridge this gap, but at this point the question remains a challenging open problem.
%
%
\bibliographystyle{splncs04}
%
\bibliography{biblio}
\end{document}